\theoremstyle{plain}
\newtheorem{theorem}[subsection]{Theorem}
\newtheorem{proposition}[subsection]{Proposition}
\newtheorem{lemma}[subsection]{Lemma}
\theoremstyle{definition}
\newtheorem{definition}[subsection]{Definition}
\newtheorem{example}[subsection]{Example}
\theoremstyle{remark}
\newtheorem{remark}[subsection]{Remark}
\numberwithin{equation}{section}
\newcommand{\Dlt}{\Delta}
\newcommand{\cal}{\mathcal}
\newcommand{\cla}{{\cal A}}
\newcommand{\clb}{{\cal B}}
\newcommand{\clc}{{\cal C}}
\newcommand{\cld}{{\cal D}}
\newcommand{\cle}{\cal{E}}
\newcommand{\clh}{{\cal H}}
\newcommand{\clk}{{\cal K}}
\newcommand{\cll}{{\cal L}}
\newcommand{\clm}{{\cal M}}
\newcommand{\clo}{{\cal O}}
\newcommand{\clr}{{\cal R}}
\def\a*{{\cal A}_{h,*}}
\def\B{{\cal B}(h)}
\def\B1{{\cal B}_1(h)}
\def\b{{\cal B}^{\rm s.a.}(h)}
\def\b1{{\cal B}^{\rm s.a.}_1(h)}
\newcommand{\raro}{\rightarrow}
\def \qed {$\Box$}
\def\a*{{\cal A}_{h,*}}
\def\B1{{\cal B}_1(h)}
\def\b{{\cal B}^{\rm s.a.}(h)}
\def\b1{{\cal B}^{\rm s.a.}_1(h)}
\begin{document} 
\title[]{Isometric actions of compact quantum groups on graph $C^{\ast}$-algebras}
\author[Joardar]{Soumalya Joardar}
\address{Department of Mathematics and Statistics, Indian Institute of Science Education and Research Kolkata, Mohanpur - 741246, West
	Bengal, India}
\email{soumalya@iiserkol.ac.in}
\author[Sharma]{Jitender Sharma}
\address{Department of Mathematics and Statistics, Indian Institute of Science Education and Research Kolkata, Mohanpur - 741246, West
	Bengal, India}
\email{js20ip007@iiserkol.ac.in}
\subjclass{46L89, 81R50, 81R60}
\keywords{Compact qauntum group, quantum isometric action, spectral triple, graph $C^{\ast}$-algebras}

\begin{abstract}
For a strongly connected directed graph $\Lambda$, it is shown that the quantum automorphism group of $\Lambda$ acts isometrically in the sense of D. Goswami et al. (\cite{goswami1}) on the spectral triple $(C^{\ast}(\Lambda), L^{2}(\Lambda^{\infty},M),D)$ constructed by Farsi et al.. It is also shown that the natural action of $U_{n}^{+}$ is not isometric on the Cuntz algebra $\clo_{n}$.
\end{abstract}
\maketitle
\section{Introduction}
Given a mathematical structure, to understand its symmetry is a fundamental problem. It often helps in reducing the complexity of a system exhibiting a given mathematical structure. Classically such symmetries are modelled by groups. For example, given a Riemannian manifold $M$, a symmetry of the manifold should be given by some group which acts on $M$ preserving the Riemannian metric. In the realm of noncommutative geometry or topology, a `compact' space is typically replaced by a unital $C^{\ast}$-algebra or von Neumann algebra. Then it becomes very important to understand `generalized symmetry' of such algebras. By now it has become standard to replace groups by what are known as compact quantum groups (CQG's in short). Since Wang in his seminal paper (see \cite{wang}) showed the existence of a genuine quantum symmetry for a space consisting of finite points (at least 4 points), CQG's have fulfilled its role as quantum symmetry object in the realm of noncommutative geometry very successfully. To study more geometric properties, A. Connes introduced the concept of a spectral triple on $C^{\ast}$-algebras. A spectral triple is a triple $(\cla,\clh,D)$ where typically $\cla$ is a $C^{\ast}$-algebra represented faithfully on a Hilbert space $\clh$ and $D$ is some unbounded operator, called the Dirac operator, capturing the `geometric' data. Quite naturally one seeks for a notion of an `isometric' action of CQG's on such spectral triples which should be a generalization of classical isometric group actions on Riemannian manifolds. This is done in a series of papers by D. Goswami and his collaborators (see \cite{goswami1}, \cite{goswami2}, \cite{goswami3} for example). In these papers, the authors develop a notion of a `quantum isometric' action of CQG's on spectral triples both in terms of the Dirac operator of the spectral triple and in terms of a suitable Laplacian operator derived from the spectral triple data. \\
\indent A very important and tractable class of $C^{\ast}$-algebras are given by what are known as graph $C^{\ast}$-algebras. As the name hints, these $C^{\ast}$-algebras are built from directed graphs. Due to the underlying combinatorial structure of the graph, this class has become very well studied class of $C^{\ast}$-algebras (see \cite{raeburn},\cites{kumjian_pask_raeburn}). This underlying combinatorial data has also been useful in studying quantum symmetry of the graph $C^{\ast}$-algebras. For example, the quantum symmetry of the underlying graph naturally acts on the corresponding graph $C^{\ast}$-algebra (see \cite{weber}, \cite{joardar1}, \cite{joardar2}, \cite{joardar3}, \cite{arnab},\cite{joardar_canadian} for quantum group actions on graph $C^{\ast}$-algebras). Recently, a spectral triple has been introduced on the graph $C^{\ast}$-algebras (see \cite{marcolli}, \cite{farsi}). In these spectral triples, the graph $C^{\ast}$-algebras corresponding to strongly connected finite directed graphs are faithfully represented on an $L^2$-space of infinite path spaces completed with respect to a measure derived from its unique KMS-state. So it becomes quite natural to understand `quantum isometric' actions on such spectral triples. In this paper, we show that given a strongly connected finite graph $\Lambda$ without multiple edge, loop or source and a spectral triple on the corresponding graph $C^{\ast}$-algebra, the natural action of the quantum automorphism group of the underlying graph (to be denoted by ${\rm Aut}^{+}(\Lambda)$) is `isometric' in the sense of Goswami et al.(\cite{goswami1}). The essential step is to construct a unitary co-representation $\widetilde{U}$ of the CQG on the $L^2$ space of infinite paths that can implement the given action. To prove the unitarity of $\widetilde{U}$, the invariance of the unique KMS-state under the action of the ${\rm Aut}^{+}(\Lambda)$ becomes important. That leads us naturally to consider the universal object in the category of CQG's acting linearly and preserving the KMS state on a graph $C^{\ast}$-algebra. In case of the Cuntz algebra with $n$-generators, it is well known that the quantum unitary group $U_{n}^{+}$ is the universal object in that category. We show that the action of $U_{n}^{+}$ is not isometric however. This makes ${\rm Aut}^{+}(\Lambda)$ a natural general class of quantum groups that can act `quantum isometrically' on the corresponding graph $C^{\ast}$-algebras. Note that although isometric group action has been studied on Cuntz algebras before (see \cites{rossi, park}), the study of quantum isometries for a general class of graph $C^{\ast}$-algebras is certainly new and we hope that this work will give impetus to understand quantum isometries of graph $C^{\ast}$-algebras.  \vspace{0.1in}\\
\textbf{Notations}: In this paper, we will use some standard notations. ${\rm Sp}\{A\}$ will stand for the linear span of elements of $A$ in a given vector space; $\mathcal{H}\overline{\otimes}\mathcal{A}$ will refer to the external tensor product of Hilbert space with a $C^{\ast}$-algebra to form a Hilbert-$\cla$ module. We shall use $\mathcal{B}( \mathcal{H}), \mathcal{K}(\mathcal{H})$ to denote the set of bounded and compact linear operators on a Hilbert space $\mathcal{H}$ respectively. If $\cle$ is a Hilbert $\mathcal{A}-$module then $\cll(\cle)$ is the $C^{\ast}$-algebra of adjointable $\mathcal{A}$-linear maps. $\otimes$ will denote the minimal tensor product between $C^{\ast}$-algebras as well as tensor product of Hilbert spaces whereas $\otimes_{\rm{alg}}$ will stand for algebraic tensor product. We use $\langle\;,\rangle$ for both the $C^{\ast}$-algebra valued inner product on a Hilbert module and a $\mathbb{C}$-valued inner product on a Hilbert space. For a $C^{\ast}$-algebra $\cla$, $\clm(\cla)$ will denote its multiplier algebra.
\section{Preliminaries}
\subsection{Compact quantum groups}
In this section we shall recall the preliminaries on compact quantum groups and their actions on $C^{\ast}$-algebras. The reader is referred to any of \cite{woronowicz1}, \cite{vandaele} for a thorough exposition on compact quantum groups. For generalities on actions of CQG on $C^{\ast}$-algebras the reader can consult \cite{decommer}.  
\begin{definition}
    A compact quantum group $\mathbb{G}$ is a pair $(C(\mathbb{G}),\Dlt)$ where $C(\mathbb{G})$ is a unital $C^{\ast}$-algebra and $\Dlt: C(\mathbb{G})\raro C(\mathbb{G})\otimes C(\mathbb{G})$ is a unital $C^{\ast}$-homomorphism which satisfies the following conditions: 
\begin{enumerate}[(I)]
\setlength\itemsep{0.5em}
    \item $(\rm{id}\otimes \Dlt)\circ \Dlt=(\Dlt \otimes \rm{id})\circ \Dlt$
    \item $\text{Sp}\{\Dlt(C(\mathbb{G}))(1\otimes C(\mathbb{G}))\}$ and $\text{Sp}\{\Dlt(C(\mathbb{G}))(C(\mathbb{G}) \otimes 1)\}$ are dense in $C(\mathbb{G})\otimes C(\mathbb{G})$. 
\end{enumerate}
\end{definition}
Given a CQG $\mathbb{G}$, there is a canonical dense Hopf $\ast$-algebra $C(\mathbb{G})_{0}$ of $C(\mathbb{G})$ where the coproduct is algebraic i.e. $\Delta(C(\mathbb{G})_{0})\subset C(\mathbb{G})_{0}\otimes_{\rm alg}C(\mathbb{G})_{0}$. Given any CQG $\mathbb{G}$, $C(\mathbb{G})_{0}$ will denote this canonical dense Hopf $\ast$-algebra.
\begin{example}[{Quantum permutation group}] Consider the universal $C^{\ast}$-algebra generated by a magic unitary of order $n$, denoted by $C(S_{n}^{+})$ i.e. 
     \begin{displaymath}C(S_{n}^{+}):=C^{\ast}\left(q_{ij}, 1\leq i,j\leq n|\; q_{ij}^2=q_{ij}=q_{ij}^{\ast}\; \text{and} \; \sum\limits_{k}q_{ik}=\sum\limits_{k}q_{kj}=1\right).\end{displaymath} With the coproduct map on the generators of $C(S_{n}^{+})$ given by $\Dlt(q_{ij})=\sum\limits_{k}q_{ik}\otimes q_{kj}$,  $(C(S_{n}^{+}),\Delta)$ becomes a compact quantum group. It is denoted by $S_{n}^{+}$ and called the quantum permutation group on $n$-points.
\end{example}
\begin{example}[Quantum unitary group] Consider the universal $C^{\ast}$-algebra generated by unitary of order $n$, denoted by $C(U_{n}^{+})$, that is, 
\begin{displaymath}C(U_{n}^{+}):=C^{\ast}\Biggl(q_{ij}, 1\leq i,j\leq n|\; q=(q_{ij})\;\text{and} \;\overline{q}=(q_{ij}^{\ast})\; \text{are unitaries}\Biggl).\end{displaymath}
The co-product map on the generators given by $\Dlt(q_{ij})=\sum\limits_{k}q_{ik}\otimes q_{kj}$. Then $(C(U_{n}^{+}),\Dlt)$ is a compact quantum group. This is called the quantum unitary group and is denoted by $U_{n}^{+}$.
\end{example}
\begin{example}[Qaut of a finite directed graph] (see \cite{banica}) Let $\Lambda=(V,E,r,s)$ be a finite directed graph without loops or multiple edges, where $V$ and $E$ are the sets of vertices and edges respectively and $r,s$ are the standard range and source maps. The quantum automorphism group of $\Lambda$ is the pair $(C({\rm Aut}^{+}(\Lambda)),\Delta)$ where $C({\rm Aut}^{+}(\Lambda))$ is the universal $C^{\ast}$-algebra generated by the elements $\{q_{ij}\}_{i,j\in V}$ such that the matrix $U=((q_{ij}))_{i,j\in V}$ is a magic unitary of order $|V|$ and $UA=AU$ where $A$ is the adjacency matrix of $\Lambda$. The co-product $\Delta$ is given on the generators by $\Delta(q_{ij})=\sum_{k}q_{ik}\otimes q_{kj}$. One can write down all the relations between the generators explicitly. In particular, we note down the following relations for future purpose (see \cite{fulton})
    \begin{eqnarray}
    \label{qaut_equ}
        q_{s(\gamma)i}q_{r(\gamma)k} =q_{r(\gamma)k} q_{s(\gamma)i}&=0,\; \gamma\in E,\; (i,k)\notin E \nonumber \\
        q_{is(\gamma)}q_{kr(\gamma)} =q_{kr(\gamma)} q_{is(\gamma)}&=0,\; \gamma\in E,\; (i,k)\notin E.
    \end{eqnarray}
\end{example}
\begin{definition}
    Given a unital $C^{\ast}$-algebra $\cla$, a compact quantum group $\mathbb{G}$ is said to act on $\cla$ if there is a unital $C^{\ast}$-homomorphism $\alpha:\cla\raro\cla\otimes C(\mathbb{G})$ such that
    \begin{enumerate}[(I)]
    \setlength\itemsep{0.5em}
    \item $(\rm{id}\otimes \Dlt)\circ \alpha=(\alpha \otimes \rm{id})\circ \alpha$.
    \item ${\rm Sp}\{\alpha(\cla)(1\otimes C(\mathbb{G}))\}$ is dense in $\cla\otimes C(\mathbb{G})$.
\end{enumerate}
\end{definition}
Moreover, a state $\phi$ on a unital $C^{\ast}$-algebra $\mathcal{A}$ is said to be \textit{invariant} under $\alpha$ if  
    $(\phi\otimes {\rm id})\circ \alpha(a)=\phi(a).1$ for all $a\in \mathcal{A}$.

Now let us recall the rudiments of graph $C^{\ast}$-algebras and in particular the actions of the quantum automorphism groups of the underlying graphs on them. This is going to be the central object of this paper. The reader is referred to \cite{raeburn} and \cites{weber, joardar1} for more details on this topic. Let $\Lambda=(V,E,r,s)$ be a strongly connected finite directed graph without multiple edges, loops and sources. Recall that by a strongly connected graph, we mean a graph where any two vertices can be joined by a path of finite length. \\

    \indent We remark that if $\Lambda=(V,E,r,s)$ is a finite directed graph without loops and multiple edges, we say two edges $e,f$ are composable if $s(e)=r(f)$ and write it as $ef$. More generally, for two paths $\lambda,\mu$, we say $\lambda,\mu$ are composable if $s(\lambda)=r(\mu)$ and write the composed path as $\lambda\mu$ (see \cite{farsi}). Also note that the use of the term `loop' is different from that in \cite{farsi} in the context of higher rank graph. In \cite{farsi}, a loop is a path $\lambda$ of finite length such that $r(\lambda)=s(\lambda)$. However, we use the term `loop' to represent an edge $e$ such that $r(e)=s(e)$. In this paper we call a path $\lambda$ of length greater than $1$ such that $r(\lambda)=s(\lambda)$ a closed path. Thus for us a closed path of length $1$ is a loop. Given a finite path $\lambda$, we denote the length of the path by $d(\lambda)$. 

\begin{definition}
Let $\Lambda=(V,E,r,s)$ be a row finite directed graph consisting of a  countable vertex set $V$, countable edge set $E$ and the maps $r,s:E\raro V$ identifying the range and source of each edge, then $C^{\ast}(\Lambda)$ is defined as the universal $C^{\ast}$ algebra generated by mutually orthogonal projections $\{p_{v},v\in V\}$ and partial isometries $\{S_{e}, e\in E\}$ that satisfy,
\begin{enumerate}[(I)]
     \setlength\itemsep{0.5em}
    \item $S_{e}^{\ast}S_{e}=p_{s(e)}$ for all $e\in E$; and 
    \item $p_{v}=\sum\limits_{\{e\in E,\; r(e)=v\}}S_{e}S_{e}^{\ast}$.
\end{enumerate}
\end{definition}
It follows immediately that for a finite graph $\Lambda$, $\sum\limits_{v} p_{v}=1$ and therefore the graph $C^{\ast}$-algebra $C^{\ast}(\Lambda)$ is unital. In this paper $\Lambda$ will always be finite. For a finite directed graph $\Lambda=(V,E,r,s)$ without loops and multiple edges and sources, let $V=\{1,2,\cdots,n\}$ and $E=\{e_{1},e_{2},\cdots,e_{m}\}$. For the proof of the following proposition, the reader is referred to  \cites{weber, joardar1}. 
\begin{proposition}
\label{actionofqaut}
Let $\Lambda=(V,E,r,s)$ be a finite directed graph without loops or multiple edges or sources and $C^{\ast}(\Lambda)$ be the corresponding graph $C^{\ast}$ algebra. Then the quantum automorphism group $Aut^{+}(\Lambda)$ acts on $C^{\ast}(\Lambda)$. The action $\alpha:C^{\ast}(\Lambda)\raro C^{\ast}(\Lambda)\otimes C(Aut^{+}(\Lambda))$ is given by, 
\begin{equation*}
   \alpha(p_{i})=\sum\limits_{k=1}^{n}p_{k}\otimes q_{ki}\;\;\;  1\leq i\leq n
\end{equation*}
and 
\begin{equation*}
   \alpha(S_{e_{j}})=\sum\limits_{l=1}^{m}S_{e_{l}}\otimes q_{r(e_{l})r(e_{j})}q_{s(e_{l})s(e_{j})} \;\;\; 1\leq j\leq m.
\end{equation*}
\end{proposition}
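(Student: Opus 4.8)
The plan is to verify that the proposed maps define a genuine $C^{\ast}$-algebraic action by checking the three things any such candidate must satisfy: that $\alpha$ respects the defining relations of $C^{\ast}(\Lambda)$ (so that it extends to a well-defined $\ast$-homomorphism on the universal algebra), that it is coassociative, and that the Podle\'s density condition holds. First I would establish well-definedness. By the universal property of $C^{\ast}(\Lambda)$, it suffices to show that the images $\alpha(p_i)$ and $\alpha(S_{e_j})$ satisfy relations (I) and (II) of the graph algebra inside $C^{\ast}(\Lambda)\otimes C({\rm Aut}^+(\Lambda))$. So I would compute $\alpha(S_{e_j})^{\ast}\alpha(S_{e_j})$ and check it equals $\alpha(p_{s(e_j)})$, and compute $\sum_{\{e_l : r(e_l)=v\}}$ of the relevant terms to recover $\alpha(p_v)$. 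These computations will repeatedly use that $((q_{ij}))$ is a magic unitary (so the $q_{ki}$ are orthogonal projections with rows and columns summing to $1$) together with the commutation relations \eqref{qaut_equ} that kill cross terms $q_{r(e_l)r(e_j)}q_{s(e_l)s(e_j)}$ whenever $(s(e_l),\text{---})$ fails to be an edge.

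Once well-definedness is in hand, coassociativity $(\mathrm{id}\otimes\Delta)\circ\alpha = (\alpha\otimes\mathrm{id})\circ\alpha$ is checked on the generators $p_i$ and $S_{e_j}$ separately. For the projections this is immediate from $\Delta(q_{ki})=\sum_l q_{kl}\otimes q_{li}$. For the partial isometries one must verify that $\Delta$ applied to the coefficient $q_{r(e_l)r(e_j)}q_{s(e_l)s(e_j)}$ matches the double sum arising from applying $\alpha$ twice; here one uses multiplicativity of $\Delta$ together with the fact that the relations \eqref{qaut_equ} force the intermediate indices to be consistent, so that a product $q_{r(e_l)r(e_k)}q_{s(e_l)s(e_k)}\otimes q_{r(e_k)r(e_j)}q_{s(e_k)s(e_j)}$ only survives when $e_k$ is a legitimate intermediate edge. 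The density condition (II) for the action then follows from the analogous density property of the coproduct on $C({\rm Aut}^+(\Lambda))$ together with the fact that $\alpha$ is essentially built from $\Delta$-like data; one argues that $\mathrm{Sp}\{\alpha(C^\ast(\Lambda))(1\otimes C({\rm Aut}^+(\Lambda)))\}$ contains enough elements $S_{e_l}\otimes q_{\cdots}c$ to recover all of $C^\ast(\Lambda)\otimes_{\mathrm{alg}}C({\rm Aut}^+(\Lambda))_0$.

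The main obstacle I expect is the well-definedness check for relation (II), namely recovering $\alpha(p_v)=\sum_k p_k\otimes q_{kv}$ from $\sum_{\{e_l:r(e_l)=v\}}\alpha(S_{e_l})\alpha(S_{e_l})^{\ast}$. Expanding this gives a sum over pairs $(e_l,e_{l'})$ of terms $S_{e_l}S_{e_{l'}}^{\ast}\otimes q_{r(e_l)v}q_{s(e_l)s(e_{l'})}q_{r(e_{l'})v}$, and one must argue that the off-diagonal terms ($e_l\neq e_{l'}$) vanish while the diagonal terms reassemble correctly. The vanishing uses that $q_{r(e_l)v}$ and $q_{r(e_{l'})v}$ are orthogonal projections in the same column of the magic unitary when $r(e_l)\neq r(e_{l'})$, and the no-multiple-edge hypothesis to ensure distinct edges into $v$ have distinct sources; the reassembly of the diagonal uses the column-sum relation $\sum_v q_{s(e_l)v}=1$ after applying relation (II) inside $C^\ast(\Lambda)$ itself. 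Keeping careful track of which relations in \eqref{qaut_equ} are invoked at each cancellation is where the bookkeeping is delicate, but since the statement is quoted from \cites{weber, joardar1} the argument is known to go through and I would cite those references for the routine verifications while highlighting the role of the hypotheses (no loops, no multiple edges, no sources) at the steps where they are genuinely used.
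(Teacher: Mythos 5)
Your overall strategy --- verify the Cuntz--Krieger relations inside $C^{\ast}(\Lambda)\otimes C({\rm Aut}^{+}(\Lambda))$ and invoke the universal property, then check coassociativity on generators and the Podle\'s density condition --- is exactly the argument of \cite{weber} and \cite{joardar1}, which is all the paper itself offers for Proposition \ref{actionofqaut} (it gives no proof and simply cites those references). So the plan is the intended one.

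However, the step you yourself single out as the main obstacle is mis-stated in a way that would make a self-contained write-up fail. The expansion of $\sum_{\{e_j:\,r(e_j)=v\}}\alpha(S_{e_j})\alpha(S_{e_j})^{\ast}$ is not a sum of terms $S_{e_l}S_{e_{l'}}^{\ast}\otimes q_{r(e_l)v}q_{s(e_l)s(e_{l'})}q_{r(e_{l'})v}$; the correct expansion keeps the summation over the edges $e_j$ into $v$ and reads
\begin{equation*}
\sum\limits_{\substack{e_j\\ r(e_j)=v}}\;\sum\limits_{l,l'} S_{e_l}S_{e_{l'}}^{\ast}\otimes q_{r(e_l)v}\,q_{s(e_l)s(e_j)}\,q_{s(e_{l'})s(e_j)}\,q_{r(e_{l'})v}.
\end{equation*}
More importantly, the off-diagonal vanishing cannot be obtained by saying that $q_{r(e_l)v}$ and $q_{r(e_{l'})v}$ are orthogonal projections in the same column: in the product above these two projections are \emph{not adjacent}, so their orthogonality is not directly applicable. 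The actual mechanism is a case split plus a ``summing-out'' trick. If $s(e_l)\neq s(e_{l'})$, the adjacent middle factors $q_{s(e_l)s(e_j)}q_{s(e_{l'})s(e_j)}$ lie in the same column and kill the term at once. If $s(e_l)=s(e_{l'})=w$ but $e_l\neq e_{l'}$ (so $r(e_l)\neq r(e_{l'})$ by absence of multiple edges), the middle collapses to $q_{w s(e_j)}$, and one uses the relations \eqref{qaut_equ} to extend the sum $\sum_{\{e_j:\,r(e_j)=v\}}q_{w s(e_j)}$ to a sum over \emph{all} vertices (the added terms vanish by \eqref{qaut_equ}, and no multiple edges makes $e_j\mapsto s(e_j)$ injective on edges into $v$); only after the middle has collapsed to $\sum_{u}q_{wu}=1$ does the column orthogonality $q_{r(e_l)v}q_{r(e_{l'})v}=0$ apply. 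The same extension trick is what reassembles the diagonal terms and what proves relation (I), and it is the engine behind the paper's own later computations (Theorem \ref{U-map}, Propositions \ref{Udensity} and \ref{implementation}), so it is worth stating correctly rather than deferring entirely to \cite{weber} and \cite{joardar1}.
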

\begin{remark}
    The above action is slightly different form the action defined by Schmidt and Weber.  The difference lies in the coefficients of $S_{e_{l}}$. In \cite{weber}, the coefficients are taken as ``$q_{s(e_{j})s(e_{l})}q_{r(e_{j})r(e_{l})}$''. This difference arises from the fact that we are using slightly different convention for composability of paths.
\end{remark}
There is a very important class of states on graph $C^{\ast}$-algebras known as the KMS states. We are not going to recall the basics of KMS states. The reader is referred to \cite{KMS} for details on KMS states on graph $C^{\ast}$-algebras. It is well known that for a strongly connected graph $\Lambda$, the graph $C^{\ast}$-algebra $C^{\ast}(\Lambda)$ has a unique KMS state at the critical inverse temperature. If we denote the unique KMS state by $\phi$, then $\phi$ is given on the vertex projections by $\phi(p_{v})=x^{\Lambda}_{v}$, where $\{x^{\Lambda}_{v}\}_{v\in V}$ is the normalized Perron-Frobenius vector of the vertex matrix (see \cite{seperable representation}). Moreover $\phi$ is invariant under the action of ${\rm Aut}^{+}(\Lambda)$ (Theorem 3.1 of \cite{joardar2}). 
\subsection{Spectral triples and isometric actions} We start this subsection by recalling the definition of a spectral triple.
\begin{definition}
    A spectral triple $(\mathcal{A},\mathcal{H},D)$ consists of ${\ast}$-algebra $\mathcal{A}$ with a faithful representation
\begin{equation*}
    \pi: \mathcal{A}\raro \mathcal{B}(\mathcal{H})
\end{equation*}
where $\clh$ is a Hilbert space and a densely defined operator $D$(possibly unbounded) called the \textit{Dirac operator} which satisfies the following conditions:
\begin{enumerate}[(I)]
\setlength\itemsep{0.5em}
    \item $D$ is self-adjoint;
    \item For all $c\in \mathbb{R}$ in the resolvent, $(D-c)^{-1}\in \mathcal{K}(\mathcal{H})$;
\item For all $a\in \mathcal{A}, [D,a]$ is a bounded operator on $\mathcal{H}$.
\end{enumerate}
\end{definition}

Now let us recall the spectral triple introduced in \cite{farsi} for a strongly connected finite $k$-graph very briskly. We are going to specialize the construction for $k=1$. To that end, let $\Lambda=(V,E,r,s)$ be a finite directed graph without sources. We remark that for the spectral triple discussion we don't need to restrict the class of graphs without multiple edges or loops. Let $\Lambda^{\ast}=\cup_{n\geq 0} E^{n}$ be the collection of all finite paths of $\Lambda$ with range and source maps extended accordingly. This along with the usual length function $d:E^{\ast}\raro \mathbb{N}$., forms a $k$-graph with $k=1$. We shall consider the set of all infinite paths,
\begin{displaymath}\Lambda^{\infty}=\{\lambda=(\lambda_{1}\lambda_{2}\cdots)|\; r(\lambda_{i+1})=s(\lambda_{i}), i=1,2,\cdots; \lambda_{1}\lambda_{2}\cdots \text{is an infinite sequence}\}\end{displaymath}
This infinite path space $\Lambda^{\infty}$ being a subset of $\prod E$ has the natural subspace topology from product space for which the \textit{cylinder sets} ,
$$[\lambda]=\{x\in \Lambda^{\infty}|\; \lambda=x_{1}x_{2}\cdots x_{d(\lambda)} \}$$
parametrised by finite length paths $\lambda$ form a basis of open set in the subspace topology. 

\begin{remark}[\cite{kumjian-pask}]
    Generally speaking, infinite path space for higher rank graph is given by the set of all degree-preserving morphisms from $\Omega_{k}$ to $\Lambda$, where $\Lambda$ is a $k$-graph. However, for a $1$-graph, the situation is simpler. Using the \textit{unique factorisation property}, given a degree preserving morphism $x:\Omega_{1}\rightarrow \Lambda$, we can uniquely identify $x$ with an infinite path $$\lambda=x(0,1)x(1,2)x(2,3)x(3,4)\cdots$$ 
 vice versa. As a result, the typical endless path and the infinite path provided by the morphism for the $1$-graph may be identified with each other.
\end{remark} 
For the following proposition, see \cite{farsi}.
\begin{proposition}
\label{unique KMS state}
    Let $\Lambda$ be a strongly connected finite $1$-graph with the vertex matrix $A$. There is a unique Borel probability measure $M$ on $\Lambda^{\infty}$, given on the cylinder sets by 
\begin{equation*}
    M([\lambda])=\rho(\Lambda)^{-d(\lambda)}x_{s(\lambda)}^{\Lambda}
\end{equation*}
Where $\rho(\lambda)$ is the spectral radius of the vertex matrix $A$ and $(x_{v}^{\Lambda})_{v\in V}$ is the unique normalized Perron-Frobenious eigenvector of $\Lambda$.
\end{proposition}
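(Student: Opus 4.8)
The plan is to establish both existence and uniqueness of the Borel probability measure $M$ satisfying the prescribed formula on cylinder sets, and the natural tool for this is Kolmogorov's extension theorem adapted to the cylinder-set structure of $\Lambda^{\infty}$. First I would verify that the proposed assignment $M([\lambda]) = \rho(\Lambda)^{-d(\lambda)} x^{\Lambda}_{s(\lambda)}$ is consistent, meaning it respects the partition of a cylinder set by its one-step extensions. Concretely, for a finite path $\lambda$, the cylinder $[\lambda]$ is the disjoint union over all edges $e$ with $r(e)=s(\lambda)$ of the cylinders $[\lambda e]$, so consistency requires
\begin{equation*}
\rho(\Lambda)^{-d(\lambda)} x^{\Lambda}_{s(\lambda)} = \sum_{\{e\in E\,:\, r(e)=s(\lambda)\}} \rho(\Lambda)^{-d(\lambda)-1} x^{\Lambda}_{s(e)}.
\end{equation*}
After cancelling the common factor $\rho(\Lambda)^{-d(\lambda)}$, this reduces to the identity $\rho(\Lambda)\, x^{\Lambda}_{s(\lambda)} = \sum_{e:\, r(e)=s(\lambda)} x^{\Lambda}_{s(e)}$, which is precisely the eigenvector equation $A x^{\Lambda} = \rho(\Lambda) x^{\Lambda}$ read off at the vertex $s(\lambda)$, since the number of edges from $s(e)$ into $s(\lambda)$ is the relevant entry of the vertex matrix $A$. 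This is where the Perron--Frobenius hypothesis does its essential work: strong connectedness guarantees $A$ is irreducible, so the Perron--Frobenius theorem supplies a strictly positive eigenvector $x^{\Lambda}$ with eigenvalue $\rho(\Lambda)$, and I would normalize it so that $\sum_{v} x^{\Lambda}_{v} = 1$, which makes $M(\Lambda^{\infty}) = \sum_{v} x^{\Lambda}_{v} = 1$ and hence $M$ a probability measure.

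Having checked consistency, I would invoke the Carath\'eodory/Kolmogorov extension machinery. The cylinder sets form a semiring (or can be completed to an algebra) generating the Borel $\sigma$-algebra on $\Lambda^{\infty}$ in the product topology, and the finitely additive premeasure defined by the formula extends uniquely to a Borel probability measure provided it is countably additive on the generating algebra. The key point making this automatic is that $\Lambda^{\infty}$ is compact in the product topology (being a closed subset of the compact product $\prod_{n} E$ when $\Lambda$ is finite), and the cylinder sets are simultaneously open and closed; thus any countable disjoint cover of a cylinder by cylinders is necessarily finite by compactness, reducing countable additivity to the finite additivity already verified. Uniqueness then follows from the $\pi$-$\lambda$ theorem, since the cylinder sets are closed under intersection and generate the full Borel $\sigma$-algebra, so two Borel measures agreeing on all cylinders must coincide.

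The step I expect to be the genuine crux is the consistency computation and its clean reduction to the Perron--Frobenius eigenvalue equation; everything downstream is a standard measure-theoretic extension argument. In writing this up I would be careful about the exact convention for the vertex matrix $A$ and the direction of edges, since the formula involves $x^{\Lambda}_{s(\lambda)}$ and the extension is by appending an edge $e$ with $r(e) = s(\lambda)$, so the relevant row/column of $A$ must be matched to the summation correctly; a transpose error here is the most likely source of a sign-of-index mistake. A minor secondary point is confirming that $\rho(\Lambda)$, the spectral radius, is indeed the Perron eigenvalue (true for irreducible nonnegative matrices), so that the normalization and positivity are internally consistent. Since this proposition is cited to \cite{farsi}, I anticipate the paper simply refers to that source rather than reproducing the extension argument in full, but the eigenvector-equation verification above is the conceptual heart of why the formula defines a measure at all.
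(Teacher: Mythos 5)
Your proof is correct, but there is nothing in the paper to compare it with line by line: the paper's entire treatment of Proposition \ref{unique KMS state} is the sentence ``For the following proposition, see \cite{farsi}'', exactly as you predicted in your closing remark. What you have written is, in substance, the standard construction carried out in that cited source: consistency of the cylinder-set assignment via the Perron--Frobenius equation, Carath\'eodory extension, compactness to promote finite additivity to countable additivity, and a $\pi$-$\lambda$ argument for uniqueness; all of these steps are deployed correctly. Two remarks. First, your worry about the orientation of the vertex matrix was well placed, and your resolution is the right one for this paper's conventions: since an infinite path satisfies $r(x_{i+1})=s(x_{i})$ and cylinders are initial segments, the refinement is $[\lambda]=\bigsqcup_{e:\,r(e)=s(\lambda)}[\lambda e]$, and with $A_{vw}=\#\{e\in E:\ r(e)=v,\ s(e)=w\}$ consistency is the row equation $(Ax^{\Lambda})_{s(\lambda)}=\rho(\Lambda)\,x^{\Lambda}_{s(\lambda)}$, with no transpose. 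Indeed your caution is more warranted than you could know: the refinement identity the paper itself writes in the proof of Proposition \ref{R_kdim}, namely $[\lambda]=\bigcup_{\mu:\,s(\mu)=r(\lambda)}[\mu\lambda]$, has the opposite orientation to the one forced by the paper's own definitions of cylinder sets and composability, whereas your $[\lambda]=\bigsqcup_{e:\,r(e)=s(\lambda)}[\lambda e]$ is the internally consistent version (it is also the only one compatible with the measure formula, as your computation shows). Second, strong connectedness does the double duty you assign to it: irreducibility of $A$ gives the strictly positive eigenvector at the spectral radius, and positivity in turn rules out vertices with no incoming edges, so the one-step refinement is never over an empty index set and the normalization gives total mass $\sum_{v}x^{\Lambda}_{v}=1$. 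In short: correct, self-contained, and it supplies precisely the argument that the paper delegates to the literature.
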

Now we are ready to introduce the spectral triple given by Farsi et al. in \cite{farsi}. Recall that a spectral triple is a data comprising of a $C^{\ast}$-algebra, a representation of the algebra on a Hilbert space and a Dirac operator. We are going to describe the data next. To that end, let $\Lambda=(V,E,r,s)$ be a strongly connected finite graph without multiple edges, loops and sources. Let the corresponding $1$-graph be $\Lambda$.

\indent \textbf{The Hilbert space}: The Hilbert space of the spectral triple is taken to be $L^{2}(\Lambda^{\infty},M)$. There is a very useful dense subspace of the Hilbert space given by the linear span of the characteristic functions of the cylinder sets parametrized by finite length paths. 

\textbf{The representation of $C^{\ast}(\Lambda)$}: Consider the seperable Hilbert space $\mathcal{H}=L^{2}(\Lambda^{\infty},M)$. Using a semi-branching function system on the measure space $(\Lambda^{\infty},M)$, one can construct a representation $\pi: C^{\ast}(\Lambda)\raro B(\mathcal{H})$ which is given by the following: 
\begin{equation}
\label{rep-1}
    \pi(S_{\lambda})(\chi_{[\eta]})= \rho(\Lambda)^{d(\lambda)/2} \chi_{[\lambda\eta]},
\end{equation}
where $\lambda,\eta$ are finite paths and $\chi_{[\eta]}$ is a characteristic function on the cylinder set $[\eta]$ as usual. To define $\pi(S_{\lambda}^{\ast})$, we consider two cases: \\
{\bf Case I} $d(\lambda)\geq d(\eta)$: There are two subcases. Firstly, let $\lambda=\eta\beta$ for some $\beta$. Then 
\begin{equation}
\label{rep-2}
    (\pi(S_{\lambda}))^{\ast}(\chi_{[\eta]})=\rho(\Lambda)^{-d(\lambda)/2}\chi_{[s(\lambda)]}
\end{equation}
Otherwise 
\begin{equation}
\label{rep-2-1}
    (\pi(S_{\lambda}^{\ast}))(\chi_{[\eta]})=0.
\end{equation}
{\bf Case 2}:  $d(\lambda)<d(\eta)$: Again there are two subcases. Firstly, let $\lambda\beta=\eta$ for some $\beta$. Then 
\begin{equation}
\label{rep-3}
    (\pi(S_{\lambda}^{\ast}))(\chi_{[\eta]})=\rho(\Lambda)^{-d(\lambda)/2}\chi_{[\beta]}
\end{equation}
Otherwise as before,
\begin{equation}
\label{rep-3-1}
    (\pi(S_{\lambda}^{\ast}))(\chi_{[\eta]})=0.
\end{equation}

\textbf{The Dirac Operator}: Define $\mathcal{R}_{-1}$ to be the linear subspace of $L^{2}(\Lambda^{\infty},M)$ spanned by constant function on $\Lambda^{\infty}$. For $k\in \mathbb{N}$, define $\mathcal{R}_{k}\subset L^{2}(\Lambda^{\infty},M) $ by, 
\begin{equation*}
    \mathcal{R}_{k}=\text{Sp}\{\chi_{[\eta]} |\; \eta\in \Lambda^{\ast}, d(\eta)\leq k\}
\end{equation*}
Now let us note down the following proposition which is going to be important in proving the main results of this paper.
\begin{proposition}
\label{R_kdim}
  For a fixed $k>1$, consider the subset $\mathcal{C}_{k}=\{\chi_{[\eta]}|\; d(\eta)=k\}\subset \mathcal{R}_{k}$. Then elements of $\mathcal{C}_{k}$ forms a linear basis for the space $\clr_{k}$. Consequently, if the graph has $m$ number of edges, ${\rm dim}(\clr_{k})\leq m^{k}$. 
\end{proposition}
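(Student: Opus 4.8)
The plan is to establish the two defining properties of a basis separately---spanning and linear independence---and then read off the dimension bound. The single combinatorial fact driving everything is that the cylinder set of a path of length $j<k$ splits as a finite disjoint union of cylinder sets of length-$k$ paths. Concretely, if $d(\eta)=j\leq k$, then an infinite path $x$ lies in $[\eta]$ precisely when its first $j$ edges spell out $\eta$, and in that case $x_{1}\cdots x_{k}=\eta\gamma$ for a uniquely determined path $\gamma=x_{j+1}\cdots x_{k}$ of length $k-j$ satisfying $r(\gamma)=s(\eta)$. Hence
\begin{equation*}
[\eta]=\bigsqcup_{\gamma:\, d(\gamma)=k-j,\; r(\gamma)=s(\eta)}[\eta\gamma],
\end{equation*}
the union being disjoint because distinct $\gamma$ force distinct initial segments of length $k$.

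For spanning I would take a generic generator $\chi_{[\eta]}$ of $\clr_k$, so $d(\eta)\leq k$. If $d(\eta)=k$ there is nothing to do, since $\chi_{[\eta]}\in\mathcal{C}_{k}$. Otherwise the decomposition above, together with the fact that characteristic functions add over disjoint unions, gives $\chi_{[\eta]}=\sum_{\gamma}\chi_{[\eta\gamma]}$, a finite sum of elements of $\mathcal{C}_{k}$. The hypothesis that $\Lambda$ has no sources guarantees that each $[\eta\gamma]$ occurring here is genuinely nonempty (every vertex receives an edge, so any finite path extends to an infinite one), so no spurious empty cylinders enter. This shows that $\mathcal{C}_{k}$ spans $\clr_k$.

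For linear independence I would argue by disjointness of supports. If $\eta\neq\eta'$ are distinct paths of length $k$, then $[\eta]\cap[\eta']=\emptyset$, since a common infinite path would have $x_{1}\cdots x_{k}$ equal to both $\eta$ and $\eta'$. Thus the functions in $\mathcal{C}_{k}$ have pairwise disjoint supports, and each support has strictly positive measure: by Proposition \ref{unique KMS state}, $M([\eta])=\rho(\Lambda)^{-k}x^{\Lambda}_{s(\eta)}$, which is positive because $\rho(\Lambda)>0$ and the normalized Perron--Frobenius eigenvector of a strongly connected graph has all-positive entries. Characteristic functions of disjoint sets of positive measure are linearly independent in $L^{2}(\Lambda^{\infty},M)$, so $\mathcal{C}_{k}$ is a linearly independent spanning set, i.e. a basis.

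Finally, $\dim(\clr_k)=|\mathcal{C}_{k}|$ equals the number of paths of length $k$ in $\Lambda$. Since such a path is a choice of $k$ edges subject to composability, there are at most $m^{k}$ of them, giving $\dim(\clr_k)\leq m^{k}$. The only points demanding genuine care are the correct use of the composability convention fixed earlier in the paper and the appeal to the no-sources hypothesis to rule out empty cylinder sets; the remainder is bookkeeping, and I do not anticipate a serious obstacle.
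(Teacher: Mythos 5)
Your proposal is correct, and its skeleton is the same as the paper's: refine the cylinder set of each path of length $j<k$ into a disjoint union of length-$k$ cylinders to get spanning, use disjointness of same-length cylinders for independence, and bound the number of length-$k$ paths by $m^{k}$. The one step where you genuinely diverge is linear independence. The paper's proof picks, for each path $\eta$ of length $k$, an infinite path inside $[\eta]$ and evaluates the relation $\sum_{\eta}c_{\eta}\chi_{[\eta]}=0$ at that point; strictly speaking, point evaluation is not well defined on elements of $L^{2}(\Lambda^{\infty},M)$, and the argument also silently assumes each cylinder is nonempty. You instead observe that the cylinders are pairwise disjoint with strictly positive measure, $M([\eta])=\rho(\Lambda)^{-k}x^{\Lambda}_{s(\eta)}>0$ by Proposition \ref{unique KMS state} together with positivity of the Perron--Frobenius vector of a strongly connected graph, so the $\chi_{[\eta]}$ are linearly independent as elements of $L^{2}$. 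This measure-theoretic justification is exactly what legitimizes the paper's evaluation step (a.e.\ restriction to each $[\eta]$ forces $c_{\eta}=0$), and it also disposes of the nonemptiness issue; your separate appeal to the no-sources hypothesis is fine but already implied by positivity of the measure. One further remark in your favor: you extend paths at the source end, writing $[\eta]$ as the disjoint union of the $[\eta\gamma]$ with $r(\gamma)=s(\eta)$, which is the direction dictated by the paper's stated composability convention ($\lambda\mu$ requires $s(\lambda)=r(\mu)$) and the one consistent with the measure formula via the eigenvector identity $\sum_{\gamma}x^{\Lambda}_{s(\gamma)}=\rho(\Lambda)^{d(\gamma)}x^{\Lambda}_{s(\eta)}$; the paper writes the refinement with the extending path prepended, as $[\mu\lambda]$ with $s(\mu)=r(\lambda)$. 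The intended content is the same, but your bookkeeping is the one that matches the conventions fixed earlier in the paper.
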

\begin{proof}
First we can observe that for any $\lambda\in \Lambda^{\ast}$ and $n\in \mathbb{N}$,
$$[\lambda]=\bigcup \limits_{\substack{\mu\in \Lambda^{\ast}, d(\mu)=n\\ s(\mu)=r(\lambda)}}[\mu\lambda]$$ 
For $\eta_{1},\eta_{2}\in\Lambda^{\ast}$ such that $\eta_{1}\neq\eta_{2}$ and $d(\eta_{1})=d(\eta_{2})$, it is easy to see that $[\eta_{1}]\cap[\eta_{2}]=\emptyset$. Thus for $\mu_{1}\neq\mu_{2}$ with $d(\mu_{1})=d(\mu_{2})$, $[\mu_{1}\lambda]\cap[\mu_{2}\lambda]=\emptyset$. This implies that any $x\in[\lambda]$ lies in a unique cylinder set $[\mu\lambda]$ for a unique finite path $\mu$ with $d(\mu)=n$. Therefore, 
\begin{equation*}
\chi_{[\lambda]}=\sum\limits_{\substack{\mu,d(\mu)=n\\ s(\mu)=r(\lambda)}}\chi_{[\mu\lambda]}
\end{equation*}
Now if we consider any $\lambda\in \Lambda^{\ast}$ such that $d(\lambda)<k$ then we have, 
\begin{equation*}
\chi_{[\lambda]}=\sum\limits_{\substack{\mu,d(\mu)=k-d(\lambda)\\ s(\mu)=r(\lambda)}}\chi_{[\mu\lambda]}
\end{equation*}
As $d(\mu\lambda)=k$ in the above formula, every element of $\mathcal{R}_{k}$ is a linear combination of elements in $\mathcal{C}_{k}$
. Now let \begin{displaymath}\sum\limits_{\eta,d(\eta)=k}c_{\eta}\chi_{[\eta]}=0,\end{displaymath} for some scalars $c_{\eta}$. As the intersection of any two cylinder sets is empty for different paths of same degree, by choosing one of the infinite paths for each $\eta$ we get $c_{\eta}=0$. Hence these vectors are linearly independent in $\mathcal{R}_{k}$. There can be at most $m^{k}$ paths of length $k$ if number of edges in the graph is $m$ giving the desired bound on the dimension of $\clr_{k}$. \end{proof}
Let $\Xi_{k}$ be the orthogonal projection in $L^{2}(\Lambda^{\infty},M)$ onto $\mathcal{R}_{k}$. For a pair $(k,l)\in \mathbb{N}\times (\mathbb{N}\cup \{-1\})$ with $k>l$, let 
\begin{equation*}
    \widehat{\Xi}_{k,l}=\Xi_{k}-\Xi_{l}.
\end{equation*}
As $\mathcal{R}_{l}\subset \mathcal{R}_{k}$ and hence $\widehat{\Xi}_{k,l}$ is an orthogonal projection onto $\mathcal{R}_{k}\cap \mathcal{R}_{l}^{\bot}$.
Given an increasing sequence of $\alpha=(\alpha_{q})$ of positive real number with $\text{lim}_{q\raro \infty}\alpha_{q}=\infty$ such that there exists a $C>0$ for which $|\alpha_{q+1}-\alpha_{q}|<C$ for all $q$, define an unbounded operator $D$ on $L^{2}(\Lambda^{\infty},M)$ by, 
\begin{equation}
\label{diracoperatorformula}
    D:= \sum\limits_{q\in \mathbb{N}}\alpha_{q}\widehat{\Xi}_{q,q-1}. 
\end{equation}
For the proof of the following proposition, the reader is referred to \cite{farsi}.
\begin{proposition}
\label{spectral triple}
    Let $\Lambda=(V,E,r,s)$ be a finite strongly connected directed graph without multiple edges and loops and the corresponding $1$-graph be $\Lambda$. Let $\mathcal{A}_{\Lambda}$ be the dense $\ast$-subalgebra of $C^{\ast}(\Lambda)$ spanned by $S_{\mu}S_{\eta}^{\ast}$ and $D$ be the Dirac operator on $\mathcal{H}=L^{2}(\Lambda^{\infty},M)$ given by the formula \ref{diracoperatorformula}. Then the data $(\mathcal{A}_{\Lambda}, \mathcal{H}, D)$ gives a spectral triple for $C^{\ast}(\Lambda)$.
\end{proposition}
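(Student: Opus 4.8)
The plan is to verify the three defining conditions of a spectral triple directly from the explicit description of $D$ as a weighted sum of the mutually orthogonal projections $\widehat{\Xi}_{q,q-1}$. Write $W_q = \mathcal{R}_q \cap \mathcal{R}_{q-1}^{\perp}$, so that $\widehat{\Xi}_{q,q-1}$ is the orthogonal projection onto $W_q$ and $D$ acts as the real scalar $\alpha_q$ on $W_q$. Since the characteristic functions of the cylinder sets are dense in $\mathcal{H}$, the subspaces $\{W_q\}$ furnish an orthogonal decomposition of $\mathcal{H}$ into finite-dimensional pieces, the finite-dimensionality being exactly Proposition \ref{R_kdim}.

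First I would dispatch conditions (I) and (II) together. Because $D$ is a real-weighted orthogonal sum over the decomposition $\mathcal{H} = \bigoplus_q W_q$, it is self-adjoint on its natural domain $\{\xi : \sum_q \alpha_q^2 \|\widehat{\Xi}_{q,q-1}\xi\|^2 < \infty\}$ by the spectral theorem, which gives (I). For (II), the resolvent $(D - c)^{-1}$ acts as the scalar $(\alpha_q - c)^{-1}$ on $W_q$; since each $W_q$ is finite-dimensional and $\alpha_q \to \infty$ forces $(\alpha_q - c)^{-1} \to 0$, the resolvent is a norm-limit of finite-rank operators, hence compact.

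The substance is condition (III): boundedness of $[D, \pi(a)]$ for $a \in \mathcal{A}_\Lambda$. By linearity it suffices to treat $a = S_\mu S_\eta^*$, and I would prove the \emph{grading-shift property}: for $r := d(\mu) - d(\eta)$ and all sufficiently large $q$, one has $\pi(S_\mu S_\eta^*) W_q \subseteq W_{q+r}$. The inclusion into $\mathcal{R}_{q+r}$ follows by applying the representation formulas \eqref{rep-1}--\eqref{rep-3-1} to the cylinder-function basis of $\mathcal{R}_q$: for $d(\zeta) > d(\eta)$ the vector $\pi(S_\mu S_\eta^*)\chi_{[\zeta]}$ is either zero or a scalar multiple of $\chi_{[\mu\beta]}$ with $d(\mu\beta) = d(\zeta) + r$. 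Orthogonality to $\mathcal{R}_{q+r-1}$ I would obtain by duality: for $\zeta' \in \mathcal{R}_{q+r-1}$ we have $\langle \pi(S_\mu S_\eta^*)\xi, \zeta'\rangle = \langle \xi, \pi(S_\eta S_\mu^*)\zeta'\rangle$, and $\pi(S_\eta S_\mu^*)$ maps $\mathcal{R}_{q+r-1}$ into $\mathcal{R}_{q-1}$, which is orthogonal to $\xi \in W_q$. Granting this, on $W_q$ the commutator computes as $[D, \pi(a)]\xi = (\alpha_{q+r} - \alpha_q)\pi(a)\xi$, and the bounded-gap hypothesis $|\alpha_{q+1} - \alpha_q| < C$ telescopes to $|\alpha_{q+r} - \alpha_q| \leq C|r|$; hence $\|[D,\pi(a)]\xi\| \leq C|r|\,\|\pi(a)\|\,\|\xi\|$ uniformly in $q$, so $[D,\pi(a)]$ is bounded.

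I expect the main obstacle to be the rigorous verification of the grading-shift property, in particular the orthogonal-complement half carried out via the adjoint $\pi(S_\eta S_\mu^*)$, together with the bookkeeping of the finitely many low-degree pieces $W_q$ with $q \leq d(\eta)$, where the clean degree shift may fail. Since these exceptional pieces are finite in number and finite-dimensional, the corresponding part of $[D,\pi(a)]$ is a finite-rank, hence bounded, perturbation, and therefore does not affect the conclusion.
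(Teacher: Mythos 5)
The paper itself does not prove this proposition --- it simply refers the reader to Farsi et al.\ \cite{farsi} --- so there is no in-paper argument to compare against; measured against the statement, your proof is correct and reconstructs essentially the argument of that reference: self-adjointness and compact resolvent follow from the orthogonal decomposition into the finite-dimensional eigenspaces $W_q$ (finite-dimensionality being Proposition \ref{R_kdim}), and boundedness of $[D,\pi(S_\mu S_\eta^{\ast})]$ follows from the degree-shift property $\pi(S_\mu S_\eta^{\ast})W_q\subseteq W_{q+r}$, $r=d(\mu)-d(\eta)$, for large $q$, combined with the bounded-gap hypothesis $|\alpha_{q+1}-\alpha_q|<C$. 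Your treatment of the exceptional low-degree pieces as a finite-rank perturbation is sound, since $\pi(S_\mu S_\eta^{\ast})\mathcal{R}_{d(\eta)}\subseteq\mathcal{R}_{d(\mu)}$ lies in the domain of $D$. Two points left implicit are routine but worth recording: the decomposition of $\mathcal{H}$ also contains the one-dimensional constant-function summand $\mathcal{R}_{-1}$ (on which $D$ acts as $0$, so the resolvent argument is unaffected), and the same grading-shift property shows that $\pi(a)$ preserves the domain of $D$, so that the commutator is defined there before one invokes its boundedness.
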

Now we prove the following proposition which is a bit of digression. This proposition is not going to be used in proving the main results in this paper.
\begin{proposition}
    With the choice $\alpha_{q}=q^{\frac{1}{2}+\epsilon}$ for some $0<\epsilon<\frac{1}{2}$ in the formula \ref{diracoperatorformula} of the Dirac operator, $(C^{\ast}(\Lambda),L^{2}(\Lambda^{\infty},M),D)$ is a $\Theta$-summable spectral triple.
\end{proposition}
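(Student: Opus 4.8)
The plan is to proceed in two stages: first confirm that the prescribed sequence $\alpha_{q}=q^{1/2+\epsilon}$ produces a genuine spectral triple by appealing to Proposition \ref{spectral triple}, and then verify the defining condition of $\Theta$-summability, namely that $\mathrm{Tr}\big(e^{-tD^{2}}\big)<\infty$ for every $t>0$. For the first stage, I would simply check the hypotheses on $(\alpha_{q})$ used in the construction of $D$: the sequence is positive, strictly increasing and tends to $\infty$, while by the mean value theorem $\alpha_{q+1}-\alpha_{q}$ is comparable to $(\tfrac12+\epsilon)q^{\epsilon-1/2}$, which is bounded (indeed tends to $0$) precisely because $\epsilon<\tfrac12$. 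This is where the upper constraint on $\epsilon$ enters, after which Proposition \ref{spectral triple} guarantees a spectral triple.

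For $\Theta$-summability the crucial observation is that $D$ is presented in already diagonalized form: its spectrum is $\{\alpha_{q}\}$, and the eigenspace attached to $\alpha_{q}$ is the range of $\widehat{\Xi}_{q,q-1}=\Xi_{q}-\Xi_{q-1}$, whose dimension is $\dim(\mathcal{R}_{q})-\dim(\mathcal{R}_{q-1})$. By Proposition \ref{R_kdim} this multiplicity is at most $\dim(\mathcal{R}_{q})\le m^{q}$, with $m=|E|$. Since $\alpha_{q}^{2}=q^{1+2\epsilon}$, I would then estimate
\begin{equation*}
\mathrm{Tr}\big(e^{-tD^{2}}\big)=\sum_{q}\big(\dim(\mathcal{R}_{q})-\dim(\mathcal{R}_{q-1})\big)\,e^{-t\alpha_{q}^{2}}\le \sum_{q}m^{q}\,e^{-tq^{1+2\epsilon}}.
\end{equation*}
Writing the general exponent as $q\log m-tq^{1+2\epsilon}=q\big(\log m-tq^{2\epsilon}\big)$ and using $\epsilon>0$, for each fixed $t>0$ the factor $tq^{2\epsilon}$ eventually exceeds $\log m+1$, so the tail of the series is dominated by a convergent geometric series $\sum e^{-q}$, while the finitely many earlier terms are finite. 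Hence the series converges for every $t>0$, which is exactly $\Theta$-summability. Note that the lower constraint $\epsilon>0$ is precisely what forces the superlinear growth $\alpha_{q}^{2}\sim q^{1+2\epsilon}$ needed to defeat the exponential multiplicity bound $m^{q}$; at $\epsilon=0$ one would only obtain $\sum(me^{-t})^{q}$, convergent for $t>\log m$ but not for all $t>0$.

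The computation is essentially routine once the spectral data of $D$ are laid out, so I do not expect a serious obstacle. The only point genuinely requiring care is the identification of the multiplicity of each eigenvalue $\alpha_{q}$ with the successive dimension difference $\dim(\mathcal{R}_{q})-\dim(\mathcal{R}_{q-1})$, together with the exponential dimension bound supplied by Proposition \ref{R_kdim}; with these in hand, convergence of the heat-kernel trace is immediate from the superlinear exponent.
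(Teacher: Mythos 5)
Your proposal is correct and follows essentially the same route as the paper: both verify the bounded-gap condition on $(\alpha_q)$, diagonalize $D$ over the eigenspaces $\mathcal{R}_q\cap\mathcal{R}_{q-1}^{\perp}$, bound the multiplicities by $m^q$ via Proposition \ref{R_kdim}, and reduce $\Theta$-summability to convergence of $\sum_q m^q e^{-tq^{1+2\epsilon}}$. The only (immaterial) difference is that you conclude by comparison with a geometric series, whereas the paper applies the root test to the same dominating series.
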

\begin{proof}
 It is easy to see that $|\alpha_{q+1}-\alpha_{q}|\leq 1$ for all $q$ so that $(C^{\ast}(\Lambda),L^{2}(\Lambda^{\infty},M),\cld)$ is a spectral triple. As $e^{-tD^2}$ is a positive operator for all $t>0$, to prove that $e^{-tD^2}$ is trace class, it is enough to show that ${\rm Tr}(e^{-tD^2})<\infty$ for all $t>0$. If ${\rm dim}(\clr_{q}\cap \clr_{q-1}^{\perp})$ is $n_{q}$, then fixing an orthonormal basis $\{\xi_{1},\ldots,\xi_{n_{q}}\}$ of $(\clr_{q}\cap \clr_{q-1}^{\perp})$, we get
 \begin{equation*}
     {\rm Tr}(e^{-tD^2})=\sum_{q\in\mathbb{N},j=1}^{j=n_{q}}\langle e^{-tD^2}\xi_{j},\xi_{j}\rangle\\
    = \sum_{q\in\mathbb{N}}e^{-tq^{1+2\epsilon}}n_{q}.
 \end{equation*}
 Now if we assume that the number of edges of the graph is $m$, then by Proposition \ref{R_kdim}, $n_{q}\leq m^q$. Then the above series is dominated by the series $\sum_{q}e^{-tq^{1+2\epsilon}}m^{q}$. If we denote the $q$-th term of the later by $a_{q}$, then $a_{q}>0$ for all $q$ and 
 \begin{equation*}
     {\rm lim}_{q}(a_{q})^{1/q}={\rm lim}_{q}m(e^{-tq^{1+2\epsilon}})^{\frac{1}{q}}={\rm lim}_{q}m/(e^{tq^{1+2\epsilon}})^{\frac{1}{q}}.
 \end{equation*} Now the denominator goes to infinity for all $t>0$ as $q\raro\infty$. This can be seen by taking logarithm of the denominator. Therefore, ${\rm lim}_{q}(a_{q})^{1/q}=0$. Hence by the root test, ${\rm Tr}(e^{-tD^2})<\infty$ for all $t>0$.\end{proof}

\subsubsection{Isometric Action}
Now we are going to recall the notion of `orientation preserving quantum isometric action'. We shall drop the term orientation preserving in the sequel. The main ideas are taken from \cite{goswami1}. We do slight modifications according to our needs. We start with the standard definition of unitary co-representation of a CQG on a Hilbert space.
\begin{definition}(see \cites{vandaele,neshveyevrepcategory})
\label{Unitary representation definition}
    A \textit{unitary co-representation} of a CQG $\mathbb{G}$ on a Hilbert space $\mathcal{H}$ (\textit{possibly infinite dimensional}) is a unitary element $\widetilde{U}$ of $\mathcal{M}(\mathcal{K}(\mathcal{H})\otimes C(\mathbb{G}))$
such that 
\begin{displaymath}({\rm id}\otimes \Dlt)\circ\widetilde{U}=\widetilde{U}_{(12)}\widetilde{U}_{(13)},\end{displaymath}
\end{definition}
There is a standard equivalent picture of a unitary co-representations which we recall now. It is going to be useful in the sequel. Let $U$ be a $\mathbb{C}$-linear map from a Hilbert space $\mathcal{H}$ to the Hilbert $C(\mathbb{G})$-module $\mathcal{H}\overline{\otimes} C(\mathbb{G})$ which satisfies
\begin{enumerate}[(I)]
\setlength\itemsep{0.5em}
    \item $\langle U(\xi), U(\eta) \rangle=\langle \xi,\eta \rangle.1$ for all $\xi,\eta\in\clh$;
    \item $(U\otimes \rm{id})\circ U=({\rm id}\otimes \Dlt)\circ U$ on some dense subspace $\clk\subset\clh$ where $U$ is algebraic i.e. $U(\clk)\subset\clk\otimes_{\rm alg}C(\mathbb{G})_{0}$;
    \item ${\rm Sp}\{U(\xi).q\;|\; \xi\in \mathcal{H}, q\in C(\mathbb{G})\}$ is dense in $\mathcal{H}\overline{\otimes} C(\mathbb{G})$.
\end{enumerate}
Given such a $\mathbb{C}$-linear map, we have a unitary adjointable map on the Hilbert module $\mathcal{H}\overline{\otimes}C(\mathbb{G})$ given by $\widetilde{U}(\xi\otimes q)=U(\xi).q$. Therefore, we have $\widetilde{U}\in \cll(\mathcal{H}\overline{\otimes}C(\mathbb{G}))=\mathcal{M}(\mathcal{K}(\mathcal{H})\otimes C(\mathbb{G}))$. Moreover $\widetilde{U}$ satisfies $({\rm id}\otimes \Dlt)\circ\widetilde{U}=\widetilde{U}_{(12)}\widetilde{U}_{(13)}$ making $\widetilde{U}$ a unitary co-representation of $\mathbb{G}$ on $\clh$. Conversely, given a unitary co-representation $\widetilde{U}$, one can construct the $\mathbb{C}$-linear map $U$ by the formula $U(\xi):=\widetilde{U}(\xi\otimes 1)$. Then it is easy to check that this $U$ satisfies (I), (II) and (III).\\
\indent Now let $(\cla,\clh,\cld)$ be a spectral triple on a $C^{\ast}$-algebra and $\alpha:\cla\raro\cla\otimes C(\mathbb{G})$ be an action of a CQG $\mathbb{G}$ on $\cla$. Also let $\tau$ be a faithful state on $C(\mathbb{G})$ with $L^{2}(\mathbb{G})$ being the GNS-space with respect to $\tau$. If we denote the representation of $\cla$ on $\clh$ by $\pi$ and the GNS representation of the $C^{\ast}$-algebra $C(\mathbb{G})$ on $L^{2}(\mathbb{G})$ by $[.]$, then there is a representation of the $C^{\ast}$-algebra $\cla\otimes C(\mathbb{G})$ on $\clb(\clh\otimes L^{2}(\mathbb{G}))$. We denote the representation by $(\pi\otimes [.])$. We say that the action $\alpha$ is implemented on the spectral data by some unitary co-representation $\widetilde{U}$ of $\mathbb{G}$ on $\clh$ if 
\begin{displaymath}
    (\pi\otimes [.])\alpha(a)=\widetilde{U}(\pi(a)\otimes [1])\widetilde{U}^{\ast}\in\clb(\clh\otimes L^{2}(\mathbb{G})) \ \forall \ a\in\cla,
\end{displaymath}
where $1$ is the identity element of $C(\mathbb{G})$. Note that $\clm(\clk(\clh)\otimes C(\mathbb{G}))$ is a subalgebra of $\clb(\clh\otimes L^{2}(\mathbb{G}))$ and therefore $\widetilde{U}\in \clb(\clh\otimes L^{2}(\mathbb{G}))$. We shall denote the map $\widetilde{U}(\pi(.)\otimes[.])\widetilde{U}^{\ast}$ from $\cla$ to $B(\clh\otimes L^{2}(\mathbb{G}))$ by ${\rm ad}_{\widetilde{U}}$. Clearly ${\rm ad}_{\widetilde{U}}$ is a $C^{\ast}$-homomorphism. The reader can compare the following definition with Definition 2.6 of \cite{goswami1} (also see the discussion in the second paragraph on page no. 1803 of \cite{rossi}).
\begin{definition}
\label{quantumisometricdefinition}
    Let $(\cla,\clh,D)$ be a spectral triple where $\cla$ is a unital $C^{\ast}$-algebra. An action $\alpha$ of a CQG $\mathbb{G}$ on $\cla$ is said to be `quantum isomteric' (or simply isometric) if 
    \begin{enumerate}[(I)]
    \setlength\itemsep{0.5em}
    \item $\alpha$ is implemented on the spectral data by some unitary co-representation $\widetilde{U}$.
    \item $\widetilde{U}\circ(D\otimes 1)=(D\otimes 1)\circ\widetilde{U}$ on appropriate domains.
    \end{enumerate} 
\end{definition}
\begin{remark}
    Note that although in the definition of a spectral triple $(\cla,\clh,\cld)$, it is in general assumed that the representation $\pi:\cla\raro\clb(\clh)$ is faithful, our consideration of quantum isometric action does not really require $\pi$ to be faithful. In this context we would like to remind the reader that for the spectral triple $(C^{\ast}(\Lambda),L^{2}(\Lambda^{\infty},M),\cld)$, the representation $\pi$ is faithful if and only if the corresponding $1$-graph is aperiodic if and only if every closed cycle has an exit. 
\end{remark}
\section{Main section}
In this section we are going to prove the following main theorem:
\begin{theorem}
    \label{maintheorem}
    Let $\Lambda$ be a strongly connected $1$-graph without multiple edge, loop or source and $(C^{\ast}(\Lambda),L^{2}(\Lambda^{\infty},M),\cld)$ be the spectral triple as given in Proposition \ref{spectral triple}. Then the action of ${\rm Aut}^{+}(\Lambda)$ on $C^{\ast}(\Lambda)$ given in Proposition \ref{actionofqaut} is isometric in the sense of Definition \ref{quantumisometricdefinition}.
\end{theorem}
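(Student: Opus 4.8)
The plan is to verify the two clauses of Definition \ref{quantumisometricdefinition} by producing an explicit unitary co-representation $\widetilde{U}$ implementing $\alpha$ and then checking that $\widetilde{U}$ commutes with $D$. Because $\Lambda$ has neither loops nor multiple edges, a finite path $\eta=\eta_{1}\cdots\eta_{n}$ is uniquely determined by its vertex sequence $v_{0}=r(\eta_{1}),\,v_{1}=s(\eta_{1}),\ldots,v_{n}=s(\eta_{n})$; this is precisely what lets the vertex-level generators of ${\rm Aut}^{+}(\Lambda)$ govern the path space. Guided by Proposition \ref{actionofqaut}, I would define a $\mathbb{C}$-linear map $U$ on the dense subspace $\mathcal{K}={\rm Sp}\{\chi_{[\eta]}\}$ by
\[ U(\chi_{[\eta]})=\sum_{\substack{\mu\in\Lambda^{\ast}\\ d(\mu)=d(\eta)}}\chi_{[\mu]}\otimes q_{w_{0}v_{0}}q_{w_{1}v_{1}}\cdots q_{w_{n}v_{n}}, \]
where $w_{0},\ldots,w_{n}$ is the vertex sequence of $\mu$. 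The relations \eqref{qaut_equ} make the coefficient vanish unless $\mu$ is a genuine path, and on a single edge the coefficient is exactly $q_{r(e_{l})r(e_{j})}q_{s(e_{l})s(e_{j})}$ from $\alpha(S_{e_{j}})$.

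First I would check that $U$ is well defined, namely compatible with the subdivision identity $\chi_{[\lambda]}=\sum_{\mu}\chi_{[\mu\lambda]}$ of Proposition \ref{R_kdim}; here the magic-unitary column sums $\sum_{k}q_{kj}=1$ together with \eqref{qaut_equ} collapse the prepended factor. Property (II) (coassociativity) then follows factorwise from $\Dlt(q_{ij})=\sum_{k}q_{ik}\otimes q_{kj}$, and (III) from the coefficients being the algebra generators. The heart of the matter is property (I),
\[ \langle U(\chi_{[\eta]}),U(\chi_{[\eta']})\rangle=\langle\chi_{[\eta]},\chi_{[\eta']}\rangle\,1. \]
Reducing to equal length and using $\langle\chi_{[\mu]},\chi_{[\mu']}\rangle=M([\mu]\cap[\mu'])=\delta_{\mu\mu'}M([\mu])$, this amounts to showing
\[ \rho(\Lambda)^{-n}\sum_{\mu}x_{s(\mu)}\,(q_{w_{0}v_{0}}\cdots q_{w_{n}v_{n}})^{\ast}(q_{w_{0}v'_{0}}\cdots q_{w_{n}v'_{n}})=\delta_{\eta\eta'}\,M([\eta])\,1. \]
The off-diagonal terms die because $q_{wv}q_{wv'}=\delta_{vv'}q_{wv}$ forces the two vertex sequences to coincide; for $\eta=\eta'$ one sums over the interior vertices $w_{0},\ldots,w_{n-1}$ with $\sum_{w}q_{wv}=1$, telescoping the product down to the single factor $q_{w_{n}v_{n}}$. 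I expect the assembly of this telescoping to be the main obstacle, and its final step is where the invariance of the KMS state under ${\rm Aut}^{+}(\Lambda)$ is indispensable: summing the terminal vertex against the Perron--Frobenius weight gives $\sum_{w}x_{w}q_{wv}=x_{v}\,1$, which reproduces $M([\eta])=\rho(\Lambda)^{-n}x_{s(\eta)}$ exactly.

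With $\widetilde{U}(\xi\otimes q)=U(\xi)q$ in hand, I would verify that it implements $\alpha$, that is $(\pi\otimes[\cdot])\alpha(a)=\widetilde{U}(\pi(a)\otimes[1])\widetilde{U}^{\ast}$. It suffices to test this on the generators $p_{i}$ and $S_{e_{j}}$ (equivalently on the spanning elements $S_{\mu}S_{\eta}^{\ast}$) applied to cylinder vectors, using the representation formulas \eqref{rep-1}--\eqref{rep-3-1}: the prepending action of $\pi(S_{e})$ lines up with the vertex-tracking coefficients of $U$, the verification once more resting on the magic-unitary relations and \eqref{qaut_equ}.

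The second clause of Definition \ref{quantumisometricdefinition} is then the easiest part. Since $U$ maps each $\chi_{[\eta]}$ into the span of the $\chi_{[\mu]}$ with $d(\mu)=d(\eta)$, it preserves every $\mathcal{R}_{k}={\rm Sp}\{\chi_{[\eta]}:d(\eta)\leq k\}$, and a one-line check on the constant function $\sum_{v}\chi_{[v]}$ using the row sum $\sum_{v}q_{wv}=1$ shows $U$ also fixes $\mathcal{R}_{-1}$. Hence $\widetilde{U}$ commutes with every $\Xi_{k}\otimes 1$, therefore with each $\widehat{\Xi}_{q,q-1}\otimes 1$, and so with $D\otimes 1=\sum_{q}\alpha_{q}(\widehat{\Xi}_{q,q-1}\otimes 1)$ on its natural domain. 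Combined with the implementation statement of the previous step, this yields the theorem.
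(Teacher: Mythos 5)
Your proposal is correct and follows essentially the same route as the paper: your vertex-sequence formula for $U$ coincides with the paper's edge-pair formula (since consecutive edges of a path share a vertex and the generators $q_{ij}$ are projections), and your subdivision-compatibility check, KMS-invariance telescoping for the inner product, generator-by-generator implementation argument, and $\mathcal{R}_{k}$-preservation argument for commutation with $D$ are exactly the content of Theorem \ref{U-map}, Lemma \ref{inner product preserving on K}, Proposition \ref{implementation}, and Proposition \ref{commutation with D}. The only point where you are thinner than the paper is the density condition (III), which is not automatic from ``the coefficients being the algebra generators'' but is proved there by an explicit Weber-style computation (Proposition \ref{Udensity}) multiplying $U(\chi_{[\zeta\xi]})$ by suitable products of generators and collapsing sums via the magic-unitary relations and \eqref{qaut_equ}.
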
We start with constructing a unitary co-representation $\widetilde{U}$ of $Aut^{+}(\Lambda)$ on the Hilbert space $\clh=L^{2}(\Lambda^{\infty},M)$. We are going to denote the Hilbert space $L^{2}(\Lambda^{\infty},M)$ by $\clh$ throughout this section. Let us denote the dense subspace say of $\clh$ given by the increasing union $\cup_{k}\clr_{k}$ by $\clk$. The idea is to define an isometric $\mathbb{C}$-linear map $U$ from $\clk$ to $\clh\overline{\otimes}C(\mathbb{G})$ and extending it to the whole of $\clh$. The important observation is that since we are seeking for a unitary co-representation that would implement the given action $\alpha$, once the representation is defined on $\clr_{0}$, it is essentially determined by the action $\alpha$ on the whole of $\clh$ which leads to the natural formula for $U$ on the eigenspaces $\clr_{k}$ (see Formula \ref{Uformula k lenghth path} below). The challenging part is to prove that the formulae gel together to produce a well-defined unitary $\widetilde{U}\in\clm(\clk(\clh)\otimes C(\mathbb{G}))$. With this heuristics we start with the following theorem.    
\begin{theorem}
\label{U-map}
     There exists a well-defined linear $\mathbb{C}$-linear map $U:\mathcal{K}\raro \mathcal{K}\otimes_{{\rm alg}} C(\rm{Aut}^{+}(\Lambda))_{0}$ given on the characteristic function of a cylinder set $[\lambda]$ for some path $\lambda=\lambda_{1}\lambda_{2}\ldots\lambda_{k}$ of length $k$ by the following formula:
     \begin{equation}
\label{Uformula k lenghth path}
    U(\chi_{[\lambda]})=\sum_{\eta}\chi_{[\eta]}\otimes q_{r(\eta_{1})r(\lambda_{1})}q_{s(\eta_{1})s(\lambda_{1})}\cdots q_{r(\eta_{k})r(\lambda_{k})}q_{s(\eta_{k})s(\lambda_{k})}
\end{equation}
where the summation varies over all the paths $\eta=\eta_{1}\eta_{2}\ldots\eta_{k}$ of degree $k$. Moreover, $(U\otimes{\rm id})\circ U=({\rm id}\otimes\Delta)\circ U$ on $\clk$.
\end{theorem}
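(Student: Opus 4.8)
The plan is to prove two things: first, that the formula \eqref{Uformula k lenghth path} gives a \emph{well-defined} linear map on $\clk$, and second, that $U$ intertwines the coproduct. The subtlety in well-definedness arises because the vectors $\chi_{[\lambda]}$ are not linearly independent across different lengths: by Proposition~\ref{R_kdim}, a cylinder function $\chi_{[\lambda]}$ of length $j<k$ can be rewritten as $\sum_{\mu}\chi_{[\mu\lambda]}$ summing over paths $\mu$ of length $k-j$ with $s(\mu)=r(\lambda)$. So the formula must be checked to be consistent under this refinement relation, i.e. applying $U$ to $\chi_{[\lambda]}$ directly must agree with applying $U$ term-by-term to $\sum_\mu \chi_{[\mu\lambda]}$ and summing. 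This is the key obstacle, and I expect it to be the heart of the argument.

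To handle this, I would first set up notation: write $q^{(\eta,\lambda)} := q_{r(\eta_1)r(\lambda_1)}q_{s(\eta_1)s(\lambda_1)}\cdots q_{r(\eta_k)r(\lambda_k)}q_{s(\eta_k)s(\lambda_k)}$ for the coefficient attached to the pair $(\eta,\lambda)$ of equal-length paths, so that $U(\chi_{[\lambda]})=\sum_{d(\eta)=d(\lambda)}\chi_{[\eta]}\otimes q^{(\eta,\lambda)}$. The consistency check then amounts to showing
\begin{equation*}
U(\chi_{[\lambda]})=\sum_{\substack{\mu:\,d(\mu)=k-j\\ s(\mu)=r(\lambda)}}U(\chi_{[\mu\lambda]}),
\end{equation*}
where $j=d(\lambda)$. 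Expanding the right side, each $U(\chi_{[\mu\lambda]})$ is a sum over length-$k$ paths $\nu$; I would split $\nu=\nu'\nu''$ into its first $k-j$ edges and last $j$ edges and observe that $q^{(\nu,\mu\lambda)}=q^{(\nu',\mu)}q^{(\nu'',\lambda)}$ factorizes along the decomposition. The crucial algebraic input is the magic-unitary relation: for fixed $\lambda$, summing the leading coefficients $q^{(\nu',\mu)}$ over all composable $\mu$ of length $k-j$ collapses via the column/row sum relations $\sum_k q_{ik}=\sum_k q_{kj}=1$ together with the commutation/orthogonality relations \eqref{qaut_equ} that force composability of $\nu'$ with $\nu''$. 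What should emerge is that the coefficient telescopes to $q^{(\nu'',\lambda)}$ and the sum over $\nu'$ of the corresponding $\chi_{[\nu'\nu'']}$ reassembles $\chi_{[\nu'']}$ by Proposition~\ref{R_kdim} again, giving back $U(\chi_{[\lambda]})$. I would isolate this computation as a lemma about the coefficients and carry it out carefully, as this is where all the graph relations \eqref{qaut_equ} are genuinely used; the relations guaranteeing that non-composable index combinations give zero are what make the bookkeeping close up.

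Once well-definedness is established, linearity is immediate since $U$ is defined on a spanning set of $\clk$ and extended linearly, and one checks that the image indeed lands in $\clk\otimes_{\rm alg}C({\rm Aut}^+(\Lambda))_0$ because each $\chi_{[\eta]}$ with $d(\eta)=d(\lambda)$ lies in $\clr_{d(\lambda)}\subset\clk$ and each coefficient is a product of generators, hence in the Hopf $\ast$-algebra. For the coproduct identity $(U\otimes{\rm id})\circ U=({\rm id}\otimes\Delta)\circ U$, I would verify it on the spanning vectors $\chi_{[\lambda]}$ by direct computation. Applying $({\rm id}\otimes\Delta)$ to \eqref{Uformula k lenghth path} and using $\Delta(q_{ij})=\sum_k q_{ik}\otimes q_{kj}$ on each factor produces a sum over an intermediate length-$k$ path $\sigma$, with coefficients $q^{(\eta,\sigma)}\otimes q^{(\sigma,\lambda)}$; applying $(U\otimes{\rm id})\circ U$ produces exactly the same double sum by applying $U$ first to get the $\sigma$-indexing and then again to get the $\eta$-indexing. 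Matching the two expressions reduces to the comultiplicativity of the coefficient $q^{(\eta,\lambda)}$, which follows edge-by-edge from the fact that each $q_{r(\eta_i)r(\lambda_i)}q_{s(\eta_i)s(\lambda_i)}$ is built from generators satisfying $\Delta(q_{ij})=\sum_k q_{ik}\otimes q_{kj}$. This second part I expect to be routine once the notation $q^{(\eta,\lambda)}$ and its factorization properties are in place; the real work, and the only genuinely delicate point, is the well-definedness consistency check described above.
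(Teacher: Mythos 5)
Your proposal is correct and takes essentially the same approach as the paper: the paper also reduces well-definedness to the compatibility check $U_l(\chi_{[\lambda]})=\sum_{\mu}U_k(\chi_{[\mu\lambda]})$ on the bases $\clc_k$ of the filtration $\clr_l\subset\clr_k$, and proves it by exactly the coefficient-collapsing argument you outline (factorizing the coefficient along $\delta=\zeta\xi$, replacing the sum over composable paths $\mu$ by a free sum over vertex pairs using the relations \ref{qaut_equ}, and contracting with the magic-unitary row sums so that $\sum_{\zeta}\chi_{[\zeta\xi]}$ reassembles $\chi_{[\xi]}$). The only minor divergence is the coproduct identity, which you verify by a direct (and valid) computation with $\Delta(q_{ij})=\sum_k q_{ik}\otimes q_{kj}$, whereas the paper simply cites the known fact that the action of ${\rm Aut}^{+}(\Lambda)$ on $C(V)$ extends to a representation on all finite path spaces.
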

\begin{proof}
    Fix $k\in \mathbb{N}$ and let $\mathcal{R}_{k}$ be the finite-dimensional subspace of $L^2(\Lambda^{\infty},M)$ for which $\mathcal{C}_{k}$ forms a basis of $\mathcal{R}_{k}$ as proved in proposition \ref{R_kdim}. So we have a well-defined $\mathbb{C}$-linear map $U_{k}:\mathcal{R}_{k}\raro \mathcal{K}\otimes_{{\rm alg}}C(\text{Aut}^{+}(\Lambda))_{0}$, given on the basis elements by formula \ref{Uformula k lenghth path}.  
  Note that for $k=0$, $\clr_{0}$ is the vector space ${\rm Sp}\{\chi_{[i]}:i=1,2,\ldots,n\}$, where the vertices of $\Lambda$ are $\{1,2,\ldots,n\}$. Then the map $U_{0}$ on $\clr_{0}$ has the following formula:
\begin{equation}
    \label{Uformula vertices}
    U_{0}(\chi_{[i]})=\sum\limits_{j}\chi_{[j]}\otimes q_{ji}.
\end{equation}
As $\mathcal{R}_{k}$ forms an increasing chain of finite-dimensional subspaces, to get a well defined linear map on $\mathcal{K}$, we just need to show that for $l<k$ the maps satisfy, $U_{k}|_{\mathcal{R}_{l}}=U_{l}$. To show this we need to show equality only for the basis set $\mathcal{C}_{l}$ of $\clr_{l}$. Let $\chi_{[\lambda]}\in \mathcal{C}_{l}$. Then an element in $\mathcal{R}_{k}$ can written as a linear combination of basis element of $\mathcal{R}_{k}$. In particular, 
\begin{equation*}
\chi_{[\lambda]}=\sum\limits_{\substack{\mu,d(\mu)=k-l\\ s(\mu)=r(\lambda)}}\chi_{[\mu\lambda]}
\end{equation*}
where $\mu$ varies over all the paths of length $m=k-l$. So we need to prove, 
\begin{equation*}
    U_{l}(\chi_{[\lambda]})=\sum\limits_{\substack{\mu,d(\mu)=m\\ s(\mu)=r(\lambda)}}U_{k}(\chi_{[\mu\lambda]})
\end{equation*}
Expanding LHS by definition we get, 
\begin{equation*}
U(\chi_{[\lambda]})=\sum\limits_{\substack{\xi\in \Lambda^{*}\\
d(\xi)=l}}\chi_{[\xi]}\otimes q_{r(\xi_{m+1})r(\lambda_{m+1})}q_{s(\xi_{m+1})s(\lambda_{m+1})}\cdots q_{r(\xi_{m+l})r(\lambda_{m+l})}q_{s(\xi_{m+l})s(\lambda_{m+l})}
\end{equation*}
where we are writing $\xi=\xi_{m+1}\xi_{m+2}\cdots\xi_{m+l}$ and $\lambda=\lambda_{m+1}\lambda_{m+2}\cdots\lambda_{m+l}$.\\
On the other hand by expanding RHS we get, 
\begin{equation*}
    \sum\limits_{\substack{\mu\in \Lambda^{\ast}\\ d(\mu)=m, \\s(\mu)=r(\lambda)}}U(\chi_{[\mu\lambda]}) =\sum\limits_{\substack{\mu\in \Lambda^{\ast} \\ d(\mu)=m}}\sum\limits_{\substack{\delta\in \Lambda^{\ast} \\ d(\delta)=k}}\chi_{[\delta]}\otimes
    \begin{aligned}[t]
        &q_{r(\delta_{1})r(\mu_{1})}q_{s(\delta_{1})s(\mu_{1})}\cdots q_{r(\delta_{m})r(\mu_{m})}q_{s(\delta_{m})s(\mu_{m})} \\
        &q_{r(\delta_{m+1})r(\lambda_{m+1})}q_{s(\delta_{m+1})s(\lambda_{m+1})}\cdots\cdots \\
        &\cdots\cdots
q_{r(\delta_{m+l})r(\lambda_{m+l})}q_{s(\delta_{m+l})s(\lambda_{m+l})}
    \end{aligned}
\end{equation*}
Where $\delta=\delta_{1}\cdots\delta_{m}\delta_{m+1}\cdots\delta_{m+l}$ and $\mu=\mu_{1}\mu_{2}\cdots\mu_{m}$.\\
Since every path $\delta$ of degree $k=m+l$ has a unique decomposition into a path $\zeta,\xi$ of degree $m$ and $l$, where we write $\xi=\xi_{m+1}\cdots\xi_{m+l},\zeta=\zeta_{1}\zeta_{2}\cdots\zeta_{m}$ and the above summation becomes,
\begin{equation*}
    =\sum\limits_{\substack{\mu\in \Lambda^{\ast}, d(\mu)=m \\ s(\mu)=r(\lambda)}} \sum\limits_{\substack{\delta=\zeta\xi\in \Lambda^{\ast} \\ d(\zeta)=m, d(\xi)=l}}\chi_{[\zeta\xi]}\otimes 
    \begin{aligned}[t]
        &q_{r(\zeta_{1})r(\mu_{1})}q_{s(\zeta_{1})s(\mu_{1})}\cdots q_{r(\zeta_{m})r(\mu_{m})}q_{s(\zeta_{m})s(\mu_{m})} \\
        &q_{r(\xi_{m+1})r(\lambda_{m+1})}q_{s(\xi_{m+1})s(\lambda_{m+1})}\cdots\cdots\cdots \\
        &\cdots\cdots q_{r(\xi_{m+l})r(\lambda_{m+l})}q_{s(\xi_{m+l})s(\lambda_{m+l})}
    \end{aligned}
\end{equation*}
\begin{equation*}
    =\sum\limits_{\substack{\mu\in \Lambda^{\ast} \\ s(\mu)=r(\lambda)}} \sum\limits_{\substack{\xi\in \Lambda^{\ast}\\ d(\xi)=l }}\sum\limits_{\substack{\zeta\in \Lambda^{\ast}, d(\zeta)=m \\ s(\zeta)=r(\xi)}} \chi_{[\zeta\xi]}\otimes
    \begin{aligned}[t]
        &q_{r(\zeta_{1})r(\mu_{1})}q_{s(\zeta_{1})s(\mu_{1})}\cdots q_{r(\zeta_{m})r(\mu_{m})}q_{s(\zeta_{m})s(\mu_{m})}\\
        &q_{r(\xi_{m+1})r(\lambda_{m+1})}q_{s(\xi_{m+1})s(\lambda_{m+1})}\cdots\cdots\\
        &\cdots\cdots q_{r(\xi_{m+l})r(\lambda_{m+l})}q_{s(\xi_{m+l})s(\lambda_{m+l})}
    \end{aligned}
\end{equation*}
We shall prove now that the coefficients for a fixed $\xi$ and for any $\zeta$ with $s(\zeta)=r(\xi)$ are the  same. Now in the summands for fix $\xi,\zeta$ such that $s(\zeta)=r(\xi)$, the coefficients of $\chi_{[\zeta\xi]}$ is
\begin{equation*}
    =\sum\limits_{\substack{\mu, d(\mu)=m \\ s(\mu)=r(\lambda)}}
    \begin{aligned}[t]
        &\;q_{r(\zeta_{1})r(\mu_{1})}q_{s(\zeta_{1})s(\mu_{1})}\cdots q_{r(\zeta_{m})r(\mu_{m})}q_{s(\zeta_{m})s(\mu_{m})}
q_{r(\xi_{m+1})r(\lambda_{m+1})}q_{s(\xi_{m+1})s(\lambda_{m+1})}\\
&\; q_{r(\xi_{m+2})r(\lambda_{m+2})}q_{s(\xi_{m+2})s(\lambda_{m+2})} \cdots q_{r(\xi_{m+l})r(\lambda_{m+l})}q_{s(\xi_{m+l})s(\lambda_{m+l})}
    \end{aligned}
\end{equation*}
Now note that in the above summation, $(r(\mu_{1}),s(\mu_{1}))$ can be replaced by $(i,j)\in V\times V$. This can be proved by the following argument: Whenever $(i,j)\notin E$, $q_{r(\zeta_{1})i}q_{s(\zeta_{1}),j}=0$ by \ref{qaut_equ}. As for any $(i,j)\in E$, there are two possibilities: Either $(i,j)$ is the first edge of a $k$-length path in which case it is already included in the summation, or $(i,j)$ fails to be the first edge of a $k$-length path. In this case, as $\zeta_{1}\cdots\zeta_{m}\xi_{m+1}\cdots\xi_{m+l}$ forms a $k$-length path,
\begin{displaymath}
    q_{r(\zeta_{1})i}q_{s(\zeta_{1})j}\cdots q_{r(\zeta_{m})r(\mu_{m})}q_{s(\zeta_{m})s(\mu_{m})}
q_{r(\xi_{m+1})r(\lambda_{m+1})}\cdots q_{s(\xi_{m+l})s(\lambda_{m+l})}=0,
\end{displaymath} which enables us to replace $(r(\mu_{1}),s(\mu_{1}))$ by $(i,j)\in V$. Therefore the summand becomes equal to
\begin{equation*}
    \sum\limits_{\substack{\mu, d(\mu)=m-1 \\ s(\mu)=t(\lambda)}}\sum\limits_{i,j\in V}
    \begin{aligned}[t]
        &q_{r(\zeta_{1})i}q_{s(\zeta_{1})j}\cdots q_{r(\zeta_{m})r(\mu_{m})}q_{s(\zeta_{m})s(\mu_{m})}
q_{r(\xi_{m+1})r(\lambda_{m+1})}q_{s(\xi_{m+1})s(\lambda_{m+1})} \\
&q_{r(\xi_{m+2})r(\lambda_{m+2})}q_{s(\xi_{m+2})s(\lambda_{m+2})} \cdots q_{r(\xi_{m+l})r(\lambda_{m+l})}q_{s(\xi_{m+l})s(\lambda_{m+l})}
    \end{aligned}
\end{equation*}
\begin{equation*}
    =\sum\limits_{\substack{\mu, d(\mu)=m-1 \\ s(\mu)=r(\lambda) \\ \mu=\mu_{2}\cdots\mu_{m}}}
    \begin{aligned}[t]
        &q_{r(\zeta_{2})r(\mu_{2})}q_{s(\zeta_{2})s(\mu_{2})}\cdots q_{r(\zeta_{m})r(\mu_{m})}q_{s(\zeta_{m})s(\mu_{m})}q_{r(\xi_{m+1})r(\lambda_{m+1})}q_{s(\xi_{m+1})s(\lambda_{m+1})} \\
        &q_{r(\xi_{m+2})r(\lambda_{m+2})}q_{s(\xi_{m+2})s(\lambda_{m+2})}\cdots  q_{r(\xi_{m+l})r(\lambda_{m+l})}q_{s(\xi_{m+l})s(\lambda_{m+l})}
    \end{aligned}
\end{equation*}
By repeating the same argument, we can get rid of the first $m$ indices and the summand reduces to 
\[
q_{r(\xi_{m+1})r(\lambda_{m+1})}q_{s(\xi_{m+1})s(\lambda_{m+1})} \cdots q_{r(\xi_{m+l})r(\lambda_{m+l})}q_{s(\xi_{m+l})s(\lambda_{m+l})}
\]
Therefore the term $ \sum\limits_{\substack{\mu\in \Lambda^{\ast}, d(\mu)=m\\
s(\mu)=r(\lambda)}}U(\chi_{[\mu\lambda]})$ becomes
\begin{equation*}
    =\sum\limits_{\substack{\xi\in \Lambda^{\ast}, d(\xi)=l \\ \xi=\xi_{m+1}\cdots\xi_{m+l}}}\sum\limits_{\substack{\zeta\in \Lambda^{\ast} , d(\zeta)=m\\ \zeta=\zeta_{1}\zeta_{2}\cdots\zeta_{m}, \\ s(\zeta)=r(\xi)}}\chi_{[\zeta\xi]} \otimes \sum\limits_{\substack{\mu\in \Lambda^{\ast} \\ d(\mu)=m\\ s(\mu)=r(\lambda)}} 
    \begin{aligned}[t]
        &q_{r(\zeta_{1})r(\mu_{1})}q_{s(\zeta_{1})s(\mu_{1})}\cdots q_{r(\zeta_{m})r(\mu_{m})}q_{s(\zeta_{m})s(\mu_{m})}\\
        &q_{r(\xi_{m+1})r(\lambda_{m+1})}q_{s(\xi_{m+1})s(\lambda_{m+1})}\cdots\cdots\\
        &\cdots\cdots
        q_{r(\xi_{m+l})r(\lambda_{m+l})}q_{s(\xi_{m+l})s(\lambda_{m+l})}
    \end{aligned}
\end{equation*}
which by the above calculations equals to
\begin{equation*}
    \sum\limits_{\substack{\xi\in \Lambda, d(\xi)=l \\ \xi=\xi_{m+1}\cdots\xi_{m+l}}}\sum\limits_{\substack{\zeta, d(\zeta)=m\\ \zeta=\zeta_{1}\zeta_{2}\cdots\zeta_{m}, \\ s(\zeta)=r(\xi)}}\chi_{[\zeta\xi]}\otimes
    \begin{aligned}[t]
&q_{r(\xi_{m+1})r(\lambda_{m+1})}q_{s(\xi_{m+1})s(\lambda_{m+1})}q_{r(\xi_{m+2})r(\lambda_{m+2})} \\
&q_{s(\xi_{m+2})s(\lambda_{m+2})}\cdots\cdots q_{r(\xi_{m+l})r(\lambda_{m+l})}q_{s(\xi_{m+l})s(\lambda_{m+l})}
    \end{aligned}
\end{equation*}
\begin{equation*}
    =\sum\limits_{\substack{\xi\in \Lambda, d(\xi)=l \\ \xi=\xi_{m+1}\cdots\xi_{m+l}}}\left(\sum\limits_{\substack{\zeta, d(\zeta)=m\\ \zeta=\zeta_{1}\zeta_{2}\cdots\zeta_{m}, s(\zeta)=r(\xi)}}\chi_{[\zeta\xi]}\right)\otimes 
    \begin{aligned}[t]
        &q_{r(\xi_{m+1})r(\lambda_{m+1})}q_{s(\xi_{m+1})s(\lambda_{m+1})}\cdots\cdots \\
        &\cdots
        \cdots q_{r(\xi_{m+l})r(\lambda_{m+l})}q_{s(\xi_{m+l})s(\lambda_{m+l})}
    \end{aligned}
\end{equation*}
As we know that from the proof of proposition \ref{R_kdim}, $\chi_{[\xi]}=\sum\limits_{\substack{\zeta, d(\zeta)=m\\ \zeta=\zeta_{1}\zeta_{2}\cdots\zeta_{m}\\ s(\zeta)=r(\xi)}}\chi_{[\zeta\xi]}$ and therefore the summation becomes
\begin{equation*}
    \sum\limits_{\substack{\xi\in \Lambda^{*}\\ d(\xi)=l \\ \xi=\xi_{m+1}\cdots\xi_{m+l}}}\chi_{[\xi]}\otimes q_{r(\xi_{m+1})r(\lambda_{m+1})}q_{s(\xi_{m+1})s(\lambda_{m+1})}\cdots q_{r(\xi_{m+l})r(\lambda_{m+l})}q_{s(\xi_{m+l})s(\lambda_{m+l})}
\end{equation*}
which is nothing but $U(\chi_{[\lambda]})$.
In conclusion we have,
\begin{equation*}
U_{l}(\chi_{[\lambda]})=\sum\limits_{\substack{\mu,d(\mu)=m\\ s(\mu)=r(\lambda)}}U_{k}(\chi_{[\mu\lambda]}) 
\end{equation*}
and we have a well defined $\mathbb{C}$-linear map $U:\mathcal{K}\raro \mathcal{K}\otimes_{{\rm alg}} C(\text{Aut}^{+}(\Lambda))_{0}$ where $\mathcal{K}=\cup_{k}\mathcal{R}_{k}$ is dense in $L^2(\Lambda^{\infty},M)$. The formula $(U\otimes{\rm id})\circ U=({\rm id}\otimes \Delta)\circ U$ follows from the well known fact that the action $\alpha$ of ${\rm Aut}^{+}(\Lambda)$ on $C(V)$ extends as representation of ${\rm Aut}^{+}(\Lambda)$ on all path spaces of finite lengths.
\end{proof}
\begin{proposition}
    Let $\lambda,\eta\in\Lambda^{\ast}$ such that $d(\lambda)=d(\eta)$. Then 
    \begin{eqnarray}
    \label{inner product preserving same length}
     \langle U(\chi_{[\lambda]}),U(\chi_{[\eta]})\rangle=\langle \chi_{[\lambda]}, \chi_{[\eta]} \rangle.   
    \end{eqnarray}
\end{proposition}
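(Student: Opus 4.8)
The plan is to compute both sides of \eqref{inner product preserving same length} directly from the defining formula \eqref{Uformula k lenghth path} and compare them. Write $k=d(\lambda)=d(\eta)$. The inner product on the Hilbert module $\clh\overline{\otimes}C(\mathbb{G})$ is $C(\mathbb{G})$-valued and conjugate-linear in the first slot, so
\begin{equation*}
\langle U(\chi_{[\lambda]}),U(\chi_{[\eta]})\rangle=\sum_{\xi,\delta}\langle\chi_{[\xi]},\chi_{[\delta]}\rangle\, a_{\xi}^{\ast}b_{\delta},
\end{equation*}
where $\xi,\delta$ range over paths of length $k$, and $a_{\xi}$, $b_{\delta}$ denote the corresponding products of magic-unitary generators appearing in $U(\chi_{[\lambda]})$ and $U(\chi_{[\eta]})$. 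The first observation I would record is that the cylinder sets $[\xi]$ for distinct paths of the same length $k$ are disjoint, so $\langle\chi_{[\xi]},\chi_{[\delta]}\rangle=\delta_{\xi\delta}\,M([\xi])$ with $M([\xi])=\rho(\Lambda)^{-k}x^{\Lambda}_{s(\xi)}$ by Proposition \ref{unique KMS state}. This collapses the double sum to a single sum over $\xi$, and the scalar $\rho(\Lambda)^{-k}$ factors out of everything.

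\textbf{The main estimate.}
After this reduction the left-hand side becomes
\begin{equation*}
\rho(\Lambda)^{-k}\sum_{\xi,\,d(\xi)=k} x^{\Lambda}_{s(\xi)}\, a_{\xi}^{\ast}a_{\xi}',
\end{equation*}
where $a_{\xi}=\prod_{t}q_{r(\xi_t)r(\lambda_t)}q_{s(\xi_t)s(\lambda_t)}$ and $a_{\xi}'$ is the analogous product with $\eta$ in place of $\lambda$. The crux is to evaluate this sum using the magic-unitary relations and the relations \eqref{qaut_equ}. Each generator $q_{ij}$ is a self-adjoint projection, the rows and columns of the magic unitary are partitions of unity, and the commutation relations force many cross terms to vanish. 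The key algebraic identity I expect to need is that summing a product of these generators against the Perron--Frobenius weights $x^{\Lambda}_{v}$ reproduces the weight of the source vertex of $\lambda$ (equivalently of $\eta$): this is precisely the infinitesimal form of invariance of the KMS state $\phi$ under the action, guaranteed by Theorem 3.1 of \cite{joardar2}. Concretely, I would first handle the last edge (index $t=k$), using $\sum_{\xi_k}q_{r(\xi_k)r(\lambda_k)}q_{s(\xi_k)s(\lambda_k)}$-type sums together with the eigenvector equation $\sum_{v}x^{\Lambda}_{v}q_{v\,w}=x^{\Lambda}_{w}\cdot 1$, and then peel off edges one at a time, exactly mirroring the inductive ``peeling'' already used in the proof of Theorem \ref{U-map}.

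\textbf{Closing the argument.}
Carrying out the peeling, each step replaces a factor $x^{\Lambda}_{s(\xi_t)}$ and a sum over the last free edge by the weight attached to the corresponding vertex of $\lambda$, and the self-adjoint-projection relations ensure that $a_{\xi}^{\ast}a_{\xi}'$ survives only when the range/source data of $\lambda$ and $\eta$ match up edge by edge. At the end one is left with $\rho(\Lambda)^{-k}x^{\Lambda}_{s(\lambda)}$ times a product of Kronecker-type factors that is nonzero precisely when $\lambda=\eta$; that is, the whole expression equals $M([\lambda])\cdot 1$ if $\lambda=\eta$ and $0$ otherwise. Since $\langle\chi_{[\lambda]},\chi_{[\eta]}\rangle=\delta_{\lambda\eta}M([\lambda])$ by the same disjointness observation, this is exactly the right-hand side of \eqref{inner product preserving same length}, completing the proof. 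The hard part is the bookkeeping in the main estimate: keeping track of which generators commute and which products vanish, and invoking the Perron--Frobenius eigenvector equation at each stage so that the weights telescope correctly. I would isolate the single-edge identity as a sublemma to keep the induction clean.
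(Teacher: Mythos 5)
Your proposal is correct and takes essentially the same route as the paper: expand both sides via the defining formula for $U$, use disjointness of same-length cylinder sets to collapse the double sum to a single sum weighted by $M([\zeta])=\rho(\Lambda)^{-k}x^{\Lambda}_{s(\zeta)}$, and reduce the claim to an identity among the generators of $C({\rm Aut}^{+}(\Lambda))$ that is settled by the invariance of the unique KMS state under the action (Theorem 3.1 of \cite{joardar2}). The only difference is one of presentation: where you sketch the edge-by-edge peeling using the relation $\sum_{v}x^{\Lambda}_{v}q_{vw}=x^{\Lambda}_{w}\cdot 1$ together with the magic-unitary and vanishing relations, the paper simply cites the analogous computation in Proposition 2.31 of \cite{joardar2}.
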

\begin{proof}
\indent Fix some $k>0$ and two elements $\lambda,\eta\in\Lambda^{\ast}$ such that $d(\lambda)=d(\eta)=k$. Write $\lambda$ and $\eta$ as $\lambda_{1}\lambda_{2}\cdots\lambda_{k}$ and $\eta_{1}\eta_{2}\cdots\eta_{k}$ for appropriately composable edges $\lambda_{i}$'s and $\eta_{i}$'s respectively. Then 
\begin{equation*}
    \langle U(\chi_{[\lambda]}), U(\chi_{[\eta]})\rangle = \biggl<
    \begin{aligned}[t]
     &\sum\limits_{\zeta;d(\zeta)=k;\zeta=\zeta_{1}\cdots\zeta_{k}}\chi_{[\zeta]}\otimes q_{r(\zeta_{1})r(\lambda_{1})}q_{s(\zeta_{1})s(\lambda_{1})} \cdots q_{r(\zeta_{n})r(\lambda_{k})}q_{s(\zeta_{k})s(\lambda_{k})},\\
     &\sum\limits_{\xi;d(\xi)=k;\xi=\xi_{1}\cdots\xi_{k}}\chi_{[\xi]}\otimes q_{r(\xi_{1})r(\eta_{1})}q_{s(\xi_{1})s(\eta_{1})} \cdots q_{r(\xi_{k})r(\eta_{k})}q_{s(\xi_{k})s(\eta_{k})} \biggl>
    \end{aligned}
\end{equation*}
\begin{equation*}
    = \sum\limits_{\zeta,\xi} \langle \chi_{[\zeta]}, \chi_{[\xi]} \rangle\otimes 
    \begin{aligned}[t]
        &q_{s(\zeta_{k})s(\lambda_{k})}q_{r(\zeta_{k})r(\lambda_{k})} \cdots q_{s(\zeta_{1})s(\lambda_{1})}q_{r(\zeta_{1})r(\lambda_{1})}q_{r(\xi_{1})r(\eta_{1})}q_{s(\xi_{1})s(\eta_{1})} \\
        &q_{r(\xi_{2})r(\eta_{2})}q_{s(\xi_{2})s(\eta_{2})}\cdots q_{r(\xi_{k})r(\eta_{k})}q_{s(\xi_{k})s(\eta_{k})}
    \end{aligned}
\end{equation*}
As $d(\zeta)=d(\xi)$, $\langle \chi_{[\zeta]}, \chi_{[\xi]} \rangle=0$ for $\zeta\neq \xi$, the last expression becomes
\begin{equation*}
  =\sum\limits_{\zeta}
  \begin{aligned}[t]
      &x_{s(\zeta)}^{\Lambda} q_{s(\zeta_{k})s(\lambda_{k})}q_{r(\zeta_{k})r(\lambda_{k})} \cdots q_{s(\zeta_{1})s(\lambda_{1})}q_{r(\zeta_{1})r(\lambda_{1})}q_{r(\zeta_{1})r(\eta_{1})}q_{s(\zeta_{1})s(\eta_{1})} \cdots \\
      &\cdots q_{r(\zeta_{k})r(\eta_{k})}q_{s(\zeta_{k})s(\eta_{k})} 
  \end{aligned}
\end{equation*}
Therefore, to prove \ref{inner product preserving same length}, one needs to show that
\begin{eqnarray*}
\delta_{\lambda,\eta}x^{\Lambda}_{s(\lambda)}&=\delta_{\lambda,\eta}\sum\limits_{\zeta}x_{s(\zeta)}^{\Lambda} q_{s(\zeta_{k})s(\lambda_{k})}q_{r(\zeta_{k})r(\lambda_{k})} \cdots q_{s(\zeta_{1})s(\lambda_{1})}q_{r(\zeta_{1})r(\lambda_{1})}q_{r(\zeta_{1})r(\eta_{1})}q_{s(\zeta_{1})s(\eta_{1})} 
\\&\;\;\;\;q_{r(\zeta_{2})r(\eta_{2})}q_{s(\zeta_{2})s(\eta_{2})} \cdots q_{r(\zeta_{k})r(\eta_{k})}q_{s(\zeta_{k})s(\eta_{k})}. 
\end{eqnarray*}
Now the above equation follows from the invariance of the unique KMS state under the action $\alpha$ from Proposition \ref{actionofqaut}. The reader can compare with the relevant expression from the proof of Proposition 2.31 in \cite{joardar2}. 
\end{proof}
\begin{lemma}
\label{inner product preserving on K}
    The $\mathbb{C}$ linear map $U: \mathcal{K}\raro \mathcal{K}\otimes_{alg} C(\text{Aut}^{+}(\Lambda))_{0}$ in theorem \ref{U-map} is inner product preserving on $\clk$. 
\end{lemma}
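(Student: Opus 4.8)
The plan is to bootstrap from the equal-length case already established in the preceding proposition (equation \ref{inner product preserving same length}). Since $U$ is $\mathbb{C}$-linear and both inner products are sesquilinear, it is enough to verify the identity $\langle U(\chi_{[\lambda]}),U(\chi_{[\eta]})\rangle=\langle\chi_{[\lambda]},\chi_{[\eta]}\rangle$ on pairs of cylinder-set characteristic functions, since these span $\mathcal{K}$; the case $d(\lambda)=d(\eta)$ is exactly what was proved, so the only remaining content is the case of unequal lengths.

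First I would put $l_1=d(\lambda)$, $l_2=d(\eta)$ and fix any $k\geq\max(l_1,l_2)$. Using the refinement identity from the proof of Proposition \ref{R_kdim}, namely
$$\chi_{[\lambda]}=\sum_{\substack{\mu:\,d(\mu)=k-l_1\\ s(\mu)=r(\lambda)}}\chi_{[\mu\lambda]},\qquad \chi_{[\eta]}=\sum_{\substack{\nu:\,d(\nu)=k-l_2\\ s(\nu)=r(\eta)}}\chi_{[\nu\eta]},$$
I express both vectors as sums of characteristic functions of cylinder sets of the common length $k$. The crucial input from Theorem \ref{U-map} is the compatibility $U_{l}=U_{k}|_{\mathcal{R}_l}$, which says precisely that applying $U$ commutes with this refinement; hence $U(\chi_{[\lambda]})=\sum_{\mu}U(\chi_{[\mu\lambda]})$ and similarly for $\eta$.

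Next I would expand the module-valued inner product as the double sum $\langle U(\chi_{[\lambda]}),U(\chi_{[\eta]})\rangle=\sum_{\mu,\nu}\langle U(\chi_{[\mu\lambda]}),U(\chi_{[\nu\eta]})\rangle$. Since $\mu\lambda$ and $\nu\eta$ now have the same length $k$, the equal-length proposition applies to every term and yields $\langle U(\chi_{[\mu\lambda]}),U(\chi_{[\nu\eta]})\rangle=\langle\chi_{[\mu\lambda]},\chi_{[\nu\eta]}\rangle$ (in particular the off-diagonal terms vanish on both sides, since distinct cylinder sets of equal length are disjoint). Re-summing and running the refinement identity in reverse then gives $\langle U(\chi_{[\lambda]}),U(\chi_{[\eta]})\rangle=\langle\sum_{\mu}\chi_{[\mu\lambda]},\sum_{\nu}\chi_{[\nu\eta]}\rangle=\langle\chi_{[\lambda]},\chi_{[\eta]}\rangle$, and extending by sesquilinearity over finite linear combinations establishes the claim on all of $\mathcal{K}$.

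I do not expect a genuine obstacle at this stage: the entire analytic difficulty has already been absorbed into the equal-length proposition (which itself rested on KMS-invariance of the state) and into the well-definedness of $U$ from Theorem \ref{U-map}. The only points requiring mild care are bookkeeping ones—checking that the two factors are refined to a common length $k$ simultaneously, and that the $C({\rm Aut}^{+}(\Lambda))$-valued inner product is expanded with the conjugate placed on the correct (left) entry—but these are routine.
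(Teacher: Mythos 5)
Your proposal is correct and is essentially the paper's own argument: the paper likewise reduces to pairs of cylinder functions, expresses both $\chi_{[\lambda]}$ and $\chi_{[\eta]}$ as linear combinations of the equal-length basis $\mathcal{C}_{\max\{d(\lambda),d(\eta)\}}$ of $\mathcal{R}_{\max\{d(\lambda),d(\eta)\}}$ (which is exactly your refinement identity), and then invokes Proposition \ref{inner product preserving same length} together with sesquilinearity. Your explicit bookkeeping with the double sum and the compatibility $U_k|_{\mathcal{R}_l}=U_l$ is just a more spelled-out version of the same step.
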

\begin{proof}
    Consider paths $\lambda,\eta$ of length $k,l$ respectively. Then $\chi_{[\lambda]},\chi_{[\eta]}\in \mathcal{R}_{{\rm max}\{k,l\}}$. Therefore, we can write $\chi_{[\lambda]},\chi_{[\eta]}$ as linear combinations of basis elements in $\mathcal{R}_{{\rm max}\{k,l\}}$. But a basis for $\clr_{{\rm max}\{k,l\}}$ is given by $\clc_{{\rm max}\{k,l\}}=\{\chi_{[\zeta]}:d(\zeta)={\rm max}\{k,l\}\}$. As $U$ is inner product preserving on $\clc_{{\rm max}\{k,l\}}$ by Proposition \ref{inner product preserving same length}, $\langle U(\chi_{[\lambda]}),U(\chi_{[\eta]})\rangle=\langle\chi_{[\lambda]},\chi_{[\eta]}\rangle1$. Therefore $U$ preserves the inner product on $\clk$.  
\end{proof}
As $\mathcal{K}$ is dense in $\mathcal{H}$ and $U:\mathcal{K}\raro \mathcal{K}\otimes_{alg} C(\text{Aut}^{+}(\Lambda))_{0}$ is inner product preserving, $U$ extends to an inner product preserving $\mathbb{C}$-linear map from $\clh$ to the Hilbert module $\clh\overline{\otimes}C(\mathbb{G}) $. We continue to denote the extension by $U$. Now we move forward to prove the density condition (III) of Definition \ref{Unitary representation definition}.
\begin{proposition}
\label{Udensity}
    For the inner product preserving map $U:\mathcal{H}\raro \mathcal{H} \overline{\otimes} C(\text{Aut}^{+}(\Lambda))$,
${\rm Sp}\{U(\xi).q\;|\; \xi\in \mathcal{H}, q\in C(\text{Aut}^{+}(\Lambda))\}\; \text{is dense in}\; \mathcal{H}\overline{\otimes} C(\text{Aut}^{+}(\Lambda))$.
\end{proposition}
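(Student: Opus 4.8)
The plan is to prove the stronger statement that $\chi_{[\eta]}\otimes 1$ lies in ${\rm Sp}\{U(\xi).q\}$ for every finite path $\eta$. Granting this, $\chi_{[\eta]}\otimes q=(\chi_{[\eta]}\otimes 1).q$ lies in the span for all $q\in C({\rm Aut}^{+}(\Lambda))$, so the span contains $\clk\otimes_{\rm alg}C({\rm Aut}^{+}(\Lambda))$; since $\clk=\cup_{k}\clr_{k}$ is dense in $\clh$, this algebraic tensor product is dense in the Hilbert module $\clh\overline{\otimes}C({\rm Aut}^{+}(\Lambda))$ and we are done.

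The structural observation that makes everything work is that $U$ leaves each finite--dimensional space $\clr_{k}$ invariant: by Formula \ref{Uformula k lenghth path}, $U(\chi_{[\lambda]})$ is supported on the $\chi_{[\eta]}$ with $d(\eta)=d(\lambda)$. Thus for fixed $k$ the restriction $V_{k}:=U|_{\clr_{k}}$ is a co-representation on the finite--dimensional space $\clr_{k}$. To read off its coefficient matrix I would pass to the orthonormal basis $e_{\lambda}:=\chi_{[\lambda]}/\|\chi_{[\lambda]}\|$ ($d(\lambda)=k$), recalling $\|\chi_{[\lambda]}\|^{2}=\rho(\Lambda)^{-k}x^{\Lambda}_{s(\lambda)}$, and write $U(e_{\lambda})=\sum_{\eta}e_{\eta}\otimes v_{\eta\lambda}$ with $v=(v_{\eta\lambda})\in M_{n_{k}}(C({\rm Aut}^{+}(\Lambda))_{0})$, $n_{k}$ being the number of paths of length $k$. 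The intertwining relation $(U\otimes{\rm id})\circ U=({\rm id}\otimes\Delta)\circ U$ of Theorem \ref{U-map} is exactly the statement $\Delta(v_{\eta\lambda})=\sum_{\xi}v_{\eta\xi}\otimes v_{\xi\lambda}$, i.e.\ $v$ is the matrix of a finite--dimensional co-representation of ${\rm Aut}^{+}(\Lambda)$. (For $k=0$ one recovers $v_{ji}=\sqrt{x^{\Lambda}_{j}/x^{\Lambda}_{i}}\,q_{ji}$, and the identity $\sum_{i}U(\chi_{[i]}).q_{mi}=\chi_{[m]}\otimes 1$ already follows directly from the magic--unitary relations and illustrates what we are after.)

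The key input is that the coefficient matrix of a finite--dimensional co-representation is invertible. Applying the antipode $\kappa$ of the Hopf $\ast$-algebra $C({\rm Aut}^{+}(\Lambda))_{0}$ entrywise and using the Hopf identities $\sum_{\xi}\kappa(v_{\eta\xi})v_{\xi\lambda}=\sum_{\xi}v_{\eta\xi}\kappa(v_{\xi\lambda})=\varepsilon(v_{\eta\lambda})1$ together with $\varepsilon(v_{\eta\lambda})=\delta_{\eta\lambda}$ (the counit sends $q_{ij}\mapsto\delta_{ij}$, and as $\Lambda$ has no multiple edges a path is determined by the range--source data of its edges), one sees that $v$ is invertible in $M_{n_{k}}(C({\rm Aut}^{+}(\Lambda)))$ with inverse given entrywise by $\kappa$. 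On the other hand Lemma \ref{inner product preserving on K} applied to the orthonormal vectors $e_{\lambda}$ gives $\langle U(e_{\lambda}),U(e_{\mu})\rangle=\delta_{\lambda\mu}1$, that is $v^{\ast}v=I$; combined with invertibility this forces $v^{\ast}=v^{-1}$, so $v$ is unitary and in particular $vv^{\ast}=I$. Hence for each path $\eta$ of length $k$,
\begin{equation*}
\sum_{\lambda}U(e_{\lambda}).v_{\eta\lambda}^{\ast}=\sum_{\xi}e_{\xi}\otimes\Big(\sum_{\lambda}v_{\xi\lambda}v_{\eta\lambda}^{\ast}\Big)=\sum_{\xi}e_{\xi}\otimes(vv^{\ast})_{\xi\eta}=e_{\eta}\otimes 1 ,
\end{equation*}
which exhibits $\chi_{[\eta]}\otimes 1$ in ${\rm Sp}\{U(\xi).q\}$. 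Running this over all $k$ and taking the closure completes the proof.

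The only genuine obstacle is the passage from isometry to unitarity of $v$. The isometry property (Lemma \ref{inner product preserving on K}) yields only $v^{\ast}v=I$, and for matrices over an arbitrary $C^{\ast}$-algebra this does not imply $vv^{\ast}=I$; it is the co-representation identity of Theorem \ref{U-map}, feeding the antipode, that supplies invertibility and thereby upgrades the one--sided isometry to a genuine unitary. A minor point to handle with care is the normalisation forced by the measure $M$: the cylinder vectors $\chi_{[\lambda]}$ are orthogonal but not of unit norm, so it is cleanest to record $v$ in the orthonormal basis $\{e_{\lambda}\}$; since the rescaling factor $\sqrt{x^{\Lambda}_{s(\eta)}/x^{\Lambda}_{s(\lambda)}}$ depends only on source vertices and $d(\eta)=d(\lambda)$, it cancels in the co-representation identity and does not disturb unitarity.
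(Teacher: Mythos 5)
Your proof is correct, but it takes a genuinely different route from the paper's. The paper proves density by hand: it reduces to producing $\chi_{[\lambda]}\otimes 1$ in the span, quotes Schmidt--Weber for paths of length $\le 1$, and for longer paths multiplies $U(\chi_{[\zeta\xi]})$ by explicitly chosen products of the generators $q_{ij}$ and collapses the resulting sums using the magic-unitary relations together with the relations \ref{qaut_equ} --- in effect verifying entry by entry, one edge at a time, that the ``reversed'' coefficients invert your matrix $v$. You instead package the restriction of $U$ to each finite-dimensional subspace $\clr_{k}$ as a corepresentation matrix, obtain two-sided invertibility of $v$ from the antipode (your counit computation $\varepsilon(v_{\eta\lambda})=\delta_{\eta\lambda}$ is where the absence of multiple edges enters, exactly the same place the paper uses it), and combine this with the isometry $v^{\ast}v=I$ from Lemma \ref{inner product preserving on K} to conclude that $v$ is unitary; density then falls out of $vv^{\ast}=I$. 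Your argument is shorter, uniform in the path length (no case analysis on $d(\lambda)$), and isolates the general principle at work --- an isometric corepresentation of a CQG on a finite-dimensional space is automatically unitary --- while correctly flagging that isometry alone would not suffice over a $C^{\ast}$-algebra. The trade-offs: your proof leans on the identity $(U\otimes{\rm id})\circ U=({\rm id}\otimes\Delta)\circ U$ of Theorem \ref{U-map}, which the paper itself justifies only by a brief appeal to a known fact, so your argument is only as firm as that assertion; the paper's computation, by contrast, is self-contained at the level of generators and relations and exhibits the concrete elements $q$ that do the approximating, never invoking the Hopf-algebraic structure (counit, antipode) of $C({\rm Aut}^{+}(\Lambda))_{0}$. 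Your bookkeeping of the normalization scalars $\sqrt{x^{\Lambda}_{s(\eta)}/x^{\Lambda}_{s(\lambda)}}$ cancelling in the corepresentation identity is also correct, so there is no gap on that point.
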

\begin{proof}
Note that as the linear span of elements $\{\chi_{[\lambda]}:\lambda\in\Lambda^{\ast}\}$ is dense in $\clh$, it is enough to approximate the elements $\chi_{[\lambda]}\otimes 1$ for all $\lambda\in\Lambda^{\ast}$. When length of $\lambda$ is $0$ or $1$, this can be shown along the lines of \cite{weber}.
 Let us illustrate the proof for a path $\lambda$ of length two. For a path of any finite length the argument can be adapted. To that end let $\lambda=\gamma\beta$ be a two length path for edges $\gamma,\beta$ with $r(\beta)=s(\gamma)$. Then 
\begin{equation*}
    {\rm Sp}\{U(\xi).q\;\}\ni  \sum\limits_{i \in V}\sum\limits_{\substack{\zeta,\xi,r(\xi)=s(\zeta) \\
    r(\zeta\xi)=i\\ d(\xi)=d(\zeta)=1}} U(\chi_{[\zeta\xi]})( q_{s(\beta)s(\xi)}q_{r(\beta)r(\xi)}q_{s(\gamma)s(\zeta)}q_{r(\gamma)r(\zeta)})
\end{equation*}
Upon expanding, the last summation becomes
\begin{equation*}
    \sum\limits_{i\in V}\sum\limits_{\substack{\zeta,\xi\in \Lambda^{\ast} \\
r(\zeta\xi)=i}}\sum\limits_{\substack{\eta,\delta\in \Lambda^{\ast} \\r(\delta)=s(\eta)}} 
\begin{aligned}[t]
&\left(\chi_{[\eta\delta]}\otimes 
q_{r(\eta)r(\zeta)}q_{s(\eta)s(\zeta)}q_{r(\delta)r(\xi)}q_{s(\delta)s(\xi)}\right) \\
&( q_{s(\beta)s(\xi)}q_{r(\beta)r(\xi)}q_{s(\gamma)s(\zeta)}q_{r(\gamma)r(\zeta)})
\end{aligned}
\end{equation*}
\begin{equation*}
    =\sum\limits_{\substack{i\in V \\ \zeta,\xi \in \Lambda^{\ast}\\
    r(\zeta\xi)=i}}\sum\limits_{\eta,\delta} \chi_{[\eta\delta]} \otimes 
q_{r(\eta)r(\zeta)}q_{s(\eta)s(\zeta)}q_{r(\delta)r(\xi)}q_{s(\delta)s(\xi)}q_{s(\beta)s(\xi)}q_{r(\beta)r(\xi)}q_{s(\gamma)s(\zeta)}q_{r(\gamma)r(\zeta)}
\end{equation*}
For $s(\delta)\neq s(\beta)$, the corresponding coefficients in the above summation are zero and therefore the summation is equal to
\begin{equation*}
    \sum\limits_{i\in V}\sum\limits_{\substack{\zeta \\
r(\zeta)=i}}\sum\limits_{\substack{\eta,\delta,\xi \\ s(\delta)=s(\beta) \\ s(\eta)=r(\delta) \\ s(\zeta)=r(\xi)}}\chi_{[\eta\delta]} \otimes q_{r(\eta)r(\zeta)}q_{s(\eta)s(\zeta)}q_{r(\delta)r(\xi)}q_{s(\beta)s(\xi)}q_{r(\beta)r(\xi)}q_{s(\gamma)s(\zeta)}q_{r(\gamma)r(\zeta)}
\end{equation*}
Now in the above summation the term $q_{r(\delta)r(\xi)}q_{s(\beta)s(\xi)}q_{r(\beta)r(\xi)}$ can be replaced by $q_{r(\delta)k}q_{s(\beta)l}q_{r(\beta)k}$ for $(k,l)\in V\times V$. This is because if $(k,l)$ in not an edge, $q_{r(\delta)k}q_{s(\beta)l}=0$ by \ref{qaut_equ} as $s(\beta)=s(\delta)$. All the edges already appear in the summation. Therefore using the fact that $\sum_{l}q_{s(\beta)l}=1$, the summation reduces to 
\begin{equation*}
    \sum\limits_{i\in V}\sum\limits_{\substack{\zeta \\
r(\zeta)=i}}\sum\limits_{\substack{\eta,\delta,\xi \\ s(\delta)=s(\beta) \\ s(\eta)=r(\delta) \\ s(\zeta)=r(\xi)}}\chi_{[\eta\delta]} \otimes q_{r(\eta)r(\zeta)}q_{s(\eta)s(\zeta)}\Big(\sum\limits_{k\in V}q_{r(\delta)k}q_{r(\beta)k}\Big)q_{s(\gamma)s(\zeta)}q_{r(\gamma)r(\zeta)}.
\end{equation*}
Again for $r(\delta)\neq r(\beta)$, $q_{r(\delta)k}q_{r(\beta)k}=0$ for all $k\in V$. Therefore using the fact that $\sum_{k}q_{r(\beta)k}=1$ and the graph has no multiple edges, the last summation further reduces to
\begin{flalign*}
    \sum\limits_{i\in V}\sum\limits_{\zeta,r(\zeta)=i}\sum_{\eta}\chi_{[\eta\beta]}\otimes q_{r(\eta)r(\zeta)}q_{s(\eta)s(\zeta)}q_{s(\gamma)s(\zeta)}q_{r(\gamma)r(\zeta)}. 
\end{flalign*}
Now applying the same argument for a single edge proved in \cite{weber}, we have 
\[
 \sum\limits_{i \in V}\sum\limits_{\substack{\zeta,\xi,r(\xi)=s(\zeta) \\
    r(\zeta\xi)=i}} U(\chi_{[\zeta\xi]})(1\otimes q_{s(\beta)s(\xi)}q_{r(\beta)r(\xi)}q_{s(\gamma)s(\zeta)}q_{r(\gamma)r(\zeta)})=\chi_{[\gamma\beta]}\otimes 1, 
\]
i.e. $\chi_{[\lambda]}\otimes 1\in \text{Sp}\{U(\xi).q:\xi\in\clh,q\in C({\rm Aut}^{+}(\Lambda)\}$
\end{proof}
Hence the $\mathbb{C}$-linear map $U$ satisfies the conditions (I), (II) and (III) following the Definition \ref{Unitary representation definition} and therefore, thanks to the discussion following Definition \ref{Unitary representation definition}, $\widetilde{U}\in\cll\big(\clh\overline{\otimes}C({\rm Aut}^{+}(\Lambda))\big)=\clm\big(\clk(\clh)\otimes C({\rm Aut}^{+}(\Lambda))\big)$ is a unitary co-representation of ${\rm Aut}^{+}(\Lambda)$ on the Hilbert space $\clh$. 
\begin{proposition}
    \label{commutation with D}
    The unitary co-representation $\widetilde{U}$ commutes with the Dirac operator $D$ i.e. $\widetilde{U}\circ(D\otimes 1)=(D\otimes 1)\circ\widetilde{U}$ on appropriate domains.
\end{proposition}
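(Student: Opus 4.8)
Write $\mathbb{G}={\rm Aut}^{+}(\Lambda)$. The plan is to show that $\widetilde U$ leaves each eigenspace of $D$ invariant. Since the eigenspaces of $D$ are precisely the graded pieces of the length filtration $\clr_{-1}\sbs\clr_0\sbs\clr_1\sbs\cdots$ of $\clh$, and $U$ was built to respect that filtration, the commutation should follow once I extract enough unitarity on each finite-dimensional layer.

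First I would record that $U$ respects the filtration. The formula \ref{Uformula k lenghth path} shows that a length-$k$ path $\lambda$ is sent by $U$ to a combination of vectors $\chi_{[\eta]}\otimes(\cdots)$ with $d(\eta)=k$, so $U(\clr_k)\seq\clr_k\otimes_{\rm alg}C(\mathbb{G})_{0}$; the compatibility $U_k|_{\clr_l}=U_l$ of Theorem \ref{U-map} then gives $U(\clr_l)\seq\clr_l\otimes_{\rm alg}C(\mathbb{G})_0$ for every $l$. For the bottom layer $\clr_{-1}$ I would compute directly from \ref{Uformula vertices}, using the row sum $\sum_{v}q_{jv}=1$, that $U(\sum_{v}\chi_{[v]})=\big(\sum_{j}\chi_{[j]}\big)\otimes 1$, so the constant function is fixed. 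Hence $\widetilde U$ maps each submodule $M_k:=\clr_k\overline{\otimes}C(\mathbb{G})$ into itself.

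Next I would upgrade `into' to `onto'. The restriction $U|_{\clr_k}$ obeys the co-representation identity (Theorem \ref{U-map}) and is isometric (Lemma \ref{inner product preserving on K}), so it is a co-representation of $\mathbb{G}$ on the finite-dimensional space $\clr_k$; by the general theory of compact quantum groups (\cite{woronowicz1}) such a co-representation is invertible as a matrix over $C(\mathbb{G})_0$, and an invertible isometry is unitary. Therefore $\widetilde U|_{M_k}$ is a unitary of $M_k$, in particular $\widetilde U(M_k)=M_k$, and consequently $\widetilde U^{\ast}(M_k)=M_k$ as well. Then for $\eta\in M_k^{\perp}$ and $\zeta\in M_k$ one has $\langle\widetilde U\eta,\zeta\rangle=\langle\eta,\widetilde U^{\ast}\zeta\rangle=0$, so $\widetilde U(M_k^{\perp})\seq M_k^{\perp}$; equivalently $\widetilde U$ commutes with the orthogonal projection $\Xi_k\otimes 1$ for every $k$ (and with $\Xi_{-1}\otimes 1$). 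Writing $D\otimes1=\sum_{q}\alpha_q(\Xi_q-\Xi_{q-1})\otimes1$, this makes each eigenspace $(\clr_q\cap\clr_{q-1}^{\perp})\overline{\otimes}C(\mathbb{G})$ invariant, on which $D\otimes1$ is the scalar $\alpha_q$; on the core $\clk\otimes_{\rm alg}C(\mathbb{G})_{0}$, where every vector is a finite sum of eigenvectors, one reads off $\widetilde U(D\otimes1)=(D\otimes1)\widetilde U$ directly, and the identity extends to the full domain by closedness of $D\otimes 1$.

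The step I expect to be the main obstacle is the passage from `into' to `onto' in $\widetilde U(M_k)=M_k$. On a Hilbert module an inner-product preserving map carrying a submodule into itself need not carry it \emph{onto} itself, and without that surjectivity the orthogonal-complement argument breaks down; this is exactly the place where the invertibility of a finite-dimensional co-representation --- a feature of genuine quantum \emph{group} actions, supplied by the antipode --- is indispensable. Everything else is bookkeeping with the length grading.
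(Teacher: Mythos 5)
Your proof is correct and takes essentially the same route as the paper's: Formula \ref{Uformula k lenghth path} gives invariance of each $\clr_q$ under $U$, unitarity upgrades this to commutation of $\widetilde{U}$ with every $\Xi_q\otimes 1$ and hence with the spectral projections $\widehat{\Xi}_{q,q-1}\otimes 1$ of $D\otimes 1$, and the identity is then read off on the core. The only difference is one of detail at the step you flag as the main obstacle: the paper dispatches the passage from invariance of $\clr_q\overline{\otimes}C({\rm Aut}^{+}(\Lambda))$ to invariance of its orthogonal complement with the one-line remark that $\widetilde{U}$ is a unitary co-representation, whereas you justify it via finite-dimensionality of $\clr_q$ and invertibility of finite-dimensional co-representations, and you use $\widehat{\Xi}_{q,q-1}=\Xi_q-\Xi_{q-1}$ directly in place of the paper's lattice-meet formula for $\Xi_q\wedge\Xi_{q-1}^{\perp}$.
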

\begin{proof}
    Note that it is enough to show that $\widetilde{U}$ commutes with the projections on the eigen spaces of $D$ viz. $\Xi_{q,q-1}$ for all $q$ (see the discussion after Proposition \ref{R_kdim} for the notations). It is clear from the formula \ref{Uformula k lenghth path} that $U$ preserves the closed subspaces $\clr_{q}$ for all $q$. Therefore $\widetilde{U}$ commutes with the projections $\Xi_{q}$ for all $q$. Hence $\widetilde{U}$ being a unitary co-representation, commutes with $\Xi_{q-1}^{\perp}$ for all $q$. Then $\widetilde{U}$ commutes with $\Xi_{q}\wedge\Xi_{q-1}^{\perp}$ which is by definition $\Xi_{q,q-1}$. The last claim follows from the fact that $\Xi_{q}\wedge\Xi_{q-1}^{\perp}={\rm SOT-lim}_{n}(\Xi_{q}\Xi_{q-1}^{\perp}\Xi_{q})^n$.  
\end{proof}
\begin{proposition}
    \label{implementation}
    The action $\alpha$ of ${\rm Aut}^{+}(\Lambda)$ on $C^{\ast}(\Lambda)$ as given in Proposition \ref{actionofqaut} is implemented on the spectral data $(C^{\ast}(\Lambda),\clh,D)$ by the unitary co-representation $\widetilde{U}$.
\end{proposition}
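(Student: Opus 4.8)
The plan is to unwind what ``implemented'' means and reduce it to a covariance identity that I can check on generators. By unitarity of $\widetilde{U}$, the implementation identity $(\pi\otimes[.])\alpha(a)=\widetilde{U}(\pi(a)\otimes[1])\widetilde{U}^{\ast}$ is equivalent to the intertwining relation $(\pi\otimes{\rm id})\alpha(a)\,\widetilde{U}=\widetilde{U}\,(\pi(a)\otimes 1)$ in $\clm(\clk(\clh)\otimes C({\rm Aut}^{+}(\Lambda)))$, the stated operator form on $\clh\otimes L^{2}(\mathbb{G})$ being recovered by applying the GNS representation $\pi\otimes[.]$. Now both $(\pi\otimes{\rm id})\circ\alpha$ and ${\rm ad}_{\widetilde{U}}=\widetilde{U}(\pi(\cdot)\otimes 1)\widetilde{U}^{\ast}$ are $\ast$-homomorphisms on $C^{\ast}(\Lambda)$ (the latter because conjugation by a unitary is multiplicative), and the projections $p_{v}$ together with the partial isometries $S_{e}$ generate $C^{\ast}(\Lambda)$. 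Hence I only need the relation for $a=p_{v}$ and $a=S_{e}$, the cases $a=S_{e}^{\ast}$ being automatic by taking adjoints. Pairing both sides with $\chi_{[\eta]}\otimes 1$ and using $\widetilde{U}(\chi_{[\eta]}\otimes 1)=U(\chi_{[\eta]})$, everything collapses to the $\clh$-level identity
\begin{equation*}
(\pi\otimes{\rm id})\alpha(a)\,U(\chi_{[\eta]})=U\big(\pi(a)\chi_{[\eta]}\big)\qquad\text{for } a=p_{v},\,S_{e},\ \eta\in\Lambda^{\ast}.
\end{equation*}

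For $a=p_{v}$ I would expand $\alpha(p_{v})=\sum_{k}p_{k}\otimes q_{kv}$ and the formula \ref{Uformula k lenghth path} for $U(\chi_{[\eta]})$, using that $\pi(p_{k})\chi_{[\zeta]}=\chi_{[\zeta]}$ exactly when $r(\zeta)=k$. The coefficient of $\chi_{[\zeta]}$ on the left then carries an extra factor $q_{r(\zeta)v}$ standing immediately in front of the first-edge factor $q_{r(\zeta_{1})r(\eta_{1})}$ of $U(\chi_{[\eta]})$. Since $r(\zeta)=r(\zeta_{1})$, these two entries lie in a common row of the magic unitary: idempotency absorbs the extra factor when $v=r(\eta)$, while row-orthogonality $q_{r(\zeta_{1})v}q_{r(\zeta_{1})r(\eta)}=0$ annihilates every term when $v\neq r(\eta)$. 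This reproduces exactly $U(\pi(p_{v})\chi_{[\eta]})$, which is $U(\chi_{[\eta]})$ or $0$ according to whether $r(\eta)=v$.

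The edge case $a=S_{e}$ is the substantive computation but runs on the same mechanism. Writing $\alpha(S_{e})=\sum_{l}S_{e_{l}}\otimes q_{r(e_{l})r(e)}q_{s(e_{l})s(e)}$ and $\pi(S_{e_{l}})\chi_{[\zeta]}=\rho(\Lambda)^{1/2}\chi_{[e_{l}\zeta]}$ when $s(e_{l})=r(\zeta)$, the left side becomes a sum over paths $\xi=e_{l}\zeta$ of degree $d(\eta)+1$ whose coefficient is $q_{r(e_{l})r(e)}q_{s(e_{l})s(e)}$ times the coefficient of $\chi_{[\zeta]}$ in $U(\chi_{[\eta]})$. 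Comparing with $U(\pi(S_{e})\chi_{[\eta]})=\rho(\Lambda)^{1/2}U(\chi_{[e\eta]})$, whose coefficients are \ref{Uformula k lenghth path} applied to $e\eta$, the factor $q_{r(e_{l})r(e)}q_{s(e_{l})s(e)}$ is precisely the first-edge factor attached to the leading edge $e$, and every length-$(d(\eta)+1)$ path $\xi$ decomposes uniquely as $\xi_{1}\cdot(\xi_{2}\cdots)$, giving a perfect bijection of summands. The composability constraint $s(e)=r(\eta)$ under which $\pi(S_{e})\chi_{[\eta]}\neq 0$ is enforced automatically: the adjacent entries $q_{s(e_{l})s(e)}$ and $q_{s(e_{l})r(\eta)}$ (the latter from the first factor of $U(\chi_{[\eta]})$, using $s(e_{l})=r(\zeta)=r(\zeta_{1})$) share a row, so row-orthogonality forces $s(e)=r(\eta)$ and otherwise kills the term, matching $\pi(S_{e})\chi_{[\eta]}=0$. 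The $\rho(\Lambda)^{1/2}$ prefactors agree on both sides.

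I expect the main obstacle to be purely organizational: keeping the index bookkeeping straight and, at each step, correctly identifying which pair of adjacent $q_{ij}$'s lies in a common row so that \ref{qaut_equ}, idempotency, and row-orthogonality can be applied. The conceptual payoff, and the reason the formula \ref{Uformula k lenghth path} was the right choice, is that the range and composability constraints built into $\pi$ are not imposed by hand but emerge from these magic-unitary relations.
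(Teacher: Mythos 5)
Your proposal is correct, but it is organized quite differently from the paper's proof, and the reorganization is a genuine economy. The paper proves the covariance identity at the level of arbitrary finite paths: it establishes $(\pi\otimes [.])\,\alpha(S_{\lambda}^{\ast})U(\chi_{[\eta]})=U(\pi(S_{\lambda}^{\ast})\chi_{[\eta]})$ for every $\lambda,\eta\in\Lambda^{\ast}$ by a two-case analysis ($d(\lambda)\geq d(\eta)$ versus $d(\lambda)<d(\eta)$, each split according to whether one path extends the other), repeatedly using the index-replacement trick, the relations \ref{qaut_equ} and the row sums of the magic unitary; it then treats $\alpha(S_{\lambda})$ for arbitrary $\lambda$ as well, and only at the very end invokes the fact that $(\pi\otimes[.])\circ\alpha$ and ${\rm ad}_{\widetilde{U}}$ are $C^{\ast}$-homomorphisms, together with density of ${\rm Sp}\{S_{\lambda}S_{\mu}^{\ast}\}$, to reach all of $C^{\ast}(\Lambda)$. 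You move that homomorphism observation to the front: since both maps are $\ast$-homomorphisms (hence continuous) and $\{p_{v},S_{e}\}$ generates $C^{\ast}(\Lambda)$, with the $S_{e}^{\ast}$ case free by taking adjoints of the conjugation identity, the whole burden collapses to the vertex and single-edge computations. There the verification is as clean as you describe: in the composable case $s(e)=r(\eta)$ the two sums match term by term under the unique factorization $\xi=\xi_{1}(\xi_{2}\cdots\xi_{k+1})$, with no relations needed at all, while in the non-composable case the adjacent same-row entries $q_{s(e_{l})s(e)}$ and $q_{s(e_{l})r(\eta)}$ kill every term — exactly the mechanism the paper deploys for general paths, but in its simplest instance. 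What each route buys: yours eliminates essentially all of the paper's combinatorial bookkeeping, which is logically redundant given the homomorphism argument the paper uses at the end anyway; the paper's direct computation yields, as a by-product, explicit formulas for the implemented action on arbitrary $S_{\lambda}$ and $S_{\lambda}^{\ast}$. Two small points to make your write-up airtight: record the convention, implicit in \ref{rep-1}, that $\pi(S_{e_{l}})\chi_{[\zeta]}=0$ whenever $s(e_{l})\neq r(\zeta)$, and treat the degenerate case $d(\eta)=0$ separately, where $U(\chi_{[\eta]})$ is given by \ref{Uformula vertices} rather than \ref{Uformula k lenghth path}; the row-orthogonality argument goes through verbatim there since $r(\eta)=\eta$ for a vertex.
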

\begin{proof}
We start by showing that for any $\lambda,\eta\in\Lambda^{\ast}$, 
\begin{equation}
\label{implementation_equation}
   (\pi\otimes [.]) \alpha(S_{\lambda}^{\ast})U(\chi_{[\eta]})=U(\pi(S_{\lambda}^{\ast})\chi_{[\eta]})
\end{equation}
 \textbf{Case1:} Consider the case $d(\lambda)\geq d(\eta)$. Then there are two subcases: Firstly let $\lambda=\eta\beta$ for some path $\beta$ of finite length. Let's write $\eta=\eta_{1}\eta_{2}\ldots\eta_{m}$ and $\lambda=\lambda_{1}\lambda_{2}\cdots\lambda_{n}$. Then by the unique factorization of paths, $\lambda$ can be written as $\eta_{1}\eta_{2}\ldots\eta_{m}\lambda_{m+1}\ldots\lambda_{n}$ where $\eta_{i}$ and $\lambda_{i}$'s are appropriate composable edges. Note that $s(\lambda)=s(\lambda_{n})$. Then by \ref{rep-2},  
\begin{displaymath}
U(\pi(S_{\lambda}^{\ast})\chi_{[\eta]})=\rho(\Lambda)^{-d(\lambda)/2}U(\chi_{[s(\lambda)]})=\rho(\Lambda)^{-d(\lambda)/2}\sum\limits_{i\in V}\chi_{[i]}\otimes q_{is(\lambda)}\end{displaymath}
 By the Formula \ref{Uformula k lenghth path},
\begin{displaymath}
U(\chi_{[\eta]})=\left(\sum\limits_{\zeta:\zeta=\zeta_{1}\zeta_{2}\ldots\zeta_{m}}\chi_{[\zeta]}\otimes q_{r(\zeta_{1})r(\eta_{1})}q_{s(\zeta_{1})s(\eta_{1})}\cdots q_{r(\zeta_{m})r(\eta_{m})}q_{s(\zeta_{m})s(\eta_{m})}\right)
\end{displaymath}
Therefore, 
\begin{equation*}
    \begin{aligned}
        &(\pi\otimes [.]) \alpha(S_{\lambda}^{\ast})U(\chi_{[\eta]})=  (\pi\otimes [.])(\alpha(S_{\lambda_{n}})^{\ast}\cdots \alpha(S_{\lambda_{1}})^{\ast}) U(\chi_{[\eta]})\\
&=(\pi\otimes [.])\left(\left(\sum\limits_{\xi_{n}}S_{\xi_{n}}^{*}\otimes q_{s(\xi_{n})s(\lambda_{n})}q_{r(\xi_{n})r(\lambda_{n})}\right)\cdots \left(\sum\limits_{\xi_{1}}S_{\xi_{1}}^{\ast}\otimes q_{s(\xi_{1})s(\lambda_{1})}q_{r(\xi_{1})r(\lambda_{1})}\right)\right)\\
&\;\;\;\;\left(\sum\limits_{\zeta}\chi_{[\zeta]}\otimes q_{r(\zeta_{1})r(\eta_{1})}q_{s(\zeta_{1})s(\eta_{1})}\cdots q_{r(\zeta_{m})r(\eta_{m})}q_{s(\zeta_{m})s(\eta_{m})}\right)\\
&=(\pi\otimes [.])\left(\sum\limits_{\substack{\xi=\xi_{1}\cdots\xi_{n}}}S_{\xi}^{*}\otimes q_{s(\xi_{n})s(\lambda_{n})}q_{r(\xi_{n})r(\lambda_{n})}\cdots  q_{s(\xi_{1})s(\lambda_{1})}q_{r(\xi_{1})r(\lambda_{1})} \right)\\
&\;\;\;\;\left(\sum\limits_{\zeta}\chi_{[\zeta]}\otimes q_{r(\zeta_{1})r(\eta_{1})}q_{s(\zeta_{1})s(\eta_{1})}\cdots q_{r(\zeta_{m})r(\eta_{m})}q_{s(\zeta_{m})s(\eta_{m})}\right)
    \end{aligned}
\end{equation*}
\begin{equation*}\hfill
    \begin{aligned}
&=\left(\sum\limits_{\substack{\xi=\xi_{1}\cdots\xi_{n}}}\pi(S_{\xi})^{\ast}\otimes [q_{s(\xi_{n})s(\lambda_{n})}q_{r(\xi_{n})r(\lambda_{n})}\cdots  q_{s(\xi_{1})s(\lambda_{1})}q_{r(\xi_{1})r(\lambda_{1})}] \right)\\
&\;\;\;\;\left(\sum\limits_{\zeta}\chi_{[\zeta]}\otimes q_{r(\zeta_{1})r(\eta_{1})}q_{s(\zeta_{1})s(\eta_{1})}\cdots q_{r(\zeta_{m})r(\eta_{m})}q_{s(\zeta_{m})s(\eta_{m})}\right)
\\
\end{aligned}
\end{equation*} 
\begin{equation*}\hfill
    =\sum\limits_{\substack{\xi,\zeta\\ d(\xi)\geq d(\zeta)}}\pi(S_{\xi})^{*}\chi_{[\zeta]}\otimes 
\begin{aligned}[t]
&q_{s(\xi_{n})s(\lambda_{n})}q_{r(\xi_{n})r(\lambda_{n})}\cdots  q_{s(\xi_{1})s(\lambda_{1})}q_{r(\xi_{1})r(\lambda_{1})}q_{r(\zeta_{1})r(\eta_{1})}\\
&q_{s(\zeta_{1})s(\eta_{1})}\cdots q_{r(\zeta_{m})r(\eta_{m})}q_{s(\zeta_{m})s(\eta_{m})}
\end{aligned}
\end{equation*}
Note that $d(\xi)\geq d(\zeta)$ as the action $\alpha$ and the map $U$ are both degree preserving. Now for any path $\xi,\zeta$ with $d(\xi)\geq d(\zeta)$ such that $\xi=\zeta\gamma$ for some path $\gamma$, $\pi(S_{\xi}^{\ast})(\chi_{[\zeta]})=\chi_{[s(\xi)]}$. If there is no path $\gamma$ of length $(n-m)$ such that $\xi=\zeta\gamma$, then $\pi(S_{\xi}^{\ast})(\chi_{[\zeta]})=0$. For $\xi=\zeta\gamma$, using the unique factorisation of paths, we write $\xi=\zeta_{1}\zeta_{2}\cdots \zeta_{m}\xi_{m+1}\cdots \xi_{n}$, $\gamma=\xi_{m+1}\xi_{m+2}\cdots \xi_{n}$. Note that $s(\xi)=s(\xi_{n})$. Therefore, using the expression $\lambda=\eta_{1}\cdots \eta_{m}\lambda_{m+1}\cdots\lambda_{n}$, the last summation becomes
\begin{equation*}
    \rho(\Lambda)^{-d(\xi)/2}\sum\limits_{\substack{\xi,\zeta\\ d(\xi)\geq d(\zeta) \\ \xi=\zeta\gamma}}\chi_{[s(\xi)]}\otimes 
\begin{aligned}[t]
&q_{s(\xi_{n})s(\lambda_{n})}q_{r(\xi_{n})r(\lambda_{n})}
\cdots q_{s(\xi_{m+1})s(\lambda_{m+1})}q_{r(\xi_{m+1})r(\lambda_{m+1})}\\
&q_{s(\zeta_{m})s(\eta_{m})}q_{r(\zeta_{m})r(\eta_{m})}
\cdots
q_{s(\zeta_{1})s(\eta_{1})}q_{r(\zeta_{1})r(\eta_{1})}\\
&q_{r(\zeta_{1})r(\eta_{1})}q_{s(\zeta_{1})s(\eta_{1})}\cdots\cdots q_{r(\zeta_{m})r(\eta_{m})}q_{s(\zeta_{m})s(\eta_{m}).}
\end{aligned}
\end{equation*}
As $d(\xi)=d(\lambda)$, ignoring the common scalar factor $\rho(\Lambda)^{-d/2}$, we shall show that for a fixed $i\in V$, the coefficient of $\chi_{[i]}$ in the above summation is $q_{is(\lambda)}$ proving \ref{implementation_equation} in this subcase. The coefficient is given by
\begin{equation*}
    \sum\limits_{\substack{\xi,\zeta\\ n=d(\xi)\geq d(\zeta)=m \\ \xi=\zeta\gamma, s(\xi)=i}}
\begin{aligned}[t]
&q_{s(\xi_{n})s(\lambda_{n})}q_{r(\xi_{n})r(\lambda_{n})}
\cdots q_{s(\xi_{m+1})s(\lambda_{m+1})}q_{r(\xi_{m+1})r(\lambda_{m+1})}q_{s(\zeta_{m})s(\eta_{m})}q_{r(\zeta_{m})r(\eta_{m})}
\\
&\cdots q_{s(\zeta_{1})s(\eta_{1})}q_{r(\zeta_{1})r(\eta_{1})}q_{r(\zeta_{1})r(\eta_{1})}q_{s(\zeta_{1})s(\eta_{1})}\cdots q_{r(\zeta_{m})r(\eta_{m})}q_{s(\zeta_{m})s(\eta_{m})}
\end{aligned}
\end{equation*}
Now consider the term $q_{s(\zeta_{1})s(\eta_{1})}q_{r(\zeta_{1})r(\eta_{1})}q_{s(\zeta_{1})s(\eta_{1})}$. As $\zeta_{1}$ varies over all edges such that $\zeta_{1}\zeta_{2}\ldots\zeta_{m}$ is an $m$-length path, as in the proof of \ref{Udensity}, we can replace this term in the summation with $\sum_{k,l\in V}q_{ks(\eta_{1})}q_{lr(\eta_{1})}q_{ks(\eta_{1})}$ which is $1$ as $\sum_{k}q_{ki}=1$ for any $i$. Therefore, the summation reduces to
\begin{equation*}
    \sum\limits_{\substack{\xi,\zeta\\ d(\xi)\geq d(\zeta) \\
s(\xi)=i}}
\begin{aligned}[t]
&\;q_{s(\xi_{n})s(\lambda_{n})}q_{r(\xi_{n})r(\lambda_{n})}\cdots
q_{s(\xi_{m+1})s(\lambda_{m+1})}q_{r(\xi_{m+1})r(\lambda_{m+1})}
\\
&\;q_{s(\zeta_{m})s(\eta_{m})}q_{r(\zeta_{m})r(\eta_{m})}\cdots q_{s(\zeta_{2})s(\eta_{2})}q_{r(\zeta_{2})r(\eta_{2})}q_{s(\zeta_{2})s(\eta_{2})}\\
&\;\cdots q_{r(\zeta_{m})r(\eta_{m})}q_{s(\zeta_{m})s(\eta_{m})}
\end{aligned}
\end{equation*}
By using the same trick repeatedly, the above summation can further be reduced to
\begin{equation*}
    \sum\limits_{\substack{\xi,
s(\xi)=i}}q_{s(\xi_{n})s(\lambda_{n})}q_{r(\xi_{n})r(\lambda_{n})}\cdots q_{s(\xi_{m+1})s(\lambda_{m+1})}q_{r(\xi_{m+1})r(\lambda_{m+1})}
\end{equation*}
Now note that in the above expression, $(r(\xi_{n}),s(\xi_{n-1}),\cdots, s(\xi_{m+1}),r(\xi_{m+1}))$ can be replaced by $(l_{n},k_{n-1},l_{n-1},\cdots,k_{m+1},l_{m+1})$ where $l_{i}'s$ and $k_{i}'s$ vary over the vertex set. This is because if $((k_{m+1},l_{m+1}),\cdots,(k_{n-1},l_{n-1}),(i,l_{n}))$ forms a path of degree $n-m$ starting at $i$ then the corresponding coefficients already appear in the summation. Otherwise, as $\lambda_{m+1}\cdots\lambda_{n}$ is an $(n-m)$-length path, \begin{displaymath}q_{is(\lambda_{n})}q_{l_{n}r(\lambda_{n})}\cdots q_{l_{m+1}r(\lambda_{m+1})}=0.\end{displaymath} Therefore, the coefficient of $\chi_{[i]}$ in the summation becomes
\begin{equation*}
    \sum\limits_{\substack{l_{n},k_{n-1},\ldots,k_{m+1},l_{m+1}\\s(\xi)=s(\xi_{n})=i}}q_{s(\xi_{n})s(\lambda_{n})}q_{l_{n}r(\lambda_{n})}\cdots q_{k_{m+1}s(\lambda_{m+1})}q_{l_{m+1}r(\lambda_{m+1})}=q_{is(\lambda)}.
\end{equation*}
Now consider the sub-case where there is no path $\beta$ such that $\lambda=\eta\beta$ which means there exists $k\in\{1,2,\cdots,m\}$ such that $\lambda_{k}\neq \eta_{k}$  and hence $\pi(S_{\lambda}^{\ast})\chi_{[\eta]}=0$. For simplicity let's assume that $\lambda_{1}\neq \eta_{1}$. The case $k>1$ can be proved along the same line of argument. Using the same computation as above the term $(\pi\otimes [.]) \alpha(S_{\lambda}^{\ast})U(\chi_{[\eta]})$ in the LHS is expanded as,
\begin{equation*}
    \rho(\Lambda)^{-d/2}\sum\limits_{\substack{\xi,\zeta\\ d(\xi)\geq d(\zeta) \\ \xi=\zeta\gamma}}\chi_{[s(\xi)]}\otimes 
\begin{aligned}[t]
&q_{s(\xi_{n})s(\lambda_{n})}q_{r(\xi_{n})r(\lambda_{n})}
\cdots q_{s(\xi_{m+1})s(\lambda_{m+1})}q_{r(\xi_{m+1})r(\lambda_{m+1})}\\
&q_{s(\zeta_{m})s(\lambda_{m})}q_{r(\zeta_{m})r(\lambda_{m})}
\cdots
q_{s(\zeta_{1})s(\lambda_{1})}q_{r(\zeta_{1})r(\lambda_{1})}q_{r(\zeta_{1})r(\eta_{1})}\\
&q_{s(\zeta_{1})s(\eta_{1})}\cdots\cdots q_{r(\zeta_{m})r(\eta_{m})}q_{s(\zeta_{m})s(\eta_{m})}.
\end{aligned}
\end{equation*}
As 
$\lambda_{1}\neq \eta_{1}$ that means either $r(\lambda_{1})\neq r(\eta_{1})$ or $s(\lambda_{1})\neq s(\eta_{1})$. Now for the case $r(\lambda_{1})\neq r(\eta_{1})$ the term $q_{r(\zeta_{1})r(\lambda_{1})}q_{r(\zeta_{1})r(\eta_{1})}=0$ otherwise lets assume that $r(\lambda_{1})= r(\eta_{1})$ and $s(\lambda_{1})\neq s(\eta_{1})$, 
now using the same argument as before, we can reduce the term $q_{s(\zeta_{1})s(\lambda_{1})}q_{r(\zeta_{1})r(\eta_{1})}q_{s(\zeta_{1})s(\eta_{1})}$ to $\sum\limits_{k,l}q_{ks(\lambda_{1})}q_{lr(\eta_{1})}q_{ks(\eta_{1})}=\sum\limits_{k}q_{ks(\lambda_{1})}q_{ks(\eta_{1})}=0$
and we are done.\\
\textbf{Case 2:} $d(\lambda)<d(\eta)$: Again as in case 1, it can be divided into two subcases: there is some $\beta$ such that $\lambda\beta=\eta$ and there is no $\beta$ such that $\lambda\beta=\eta$. In the first subcase, $\pi(S_{\lambda}^{\ast})(\chi_{[\eta]})=\chi_{[\beta]}$. In the second subcase $\pi(S_{\lambda}^{\ast})(\chi_{[\eta]})=0$. In both the cases, using the same computational technique, it can be shown that 
\begin{displaymath}
    U(\pi(S_{\lambda}^{\ast})(\chi_{[\eta]}))=(\pi\otimes[.])\alpha(S_{\lambda}^{\ast})U(\chi_{[\eta]}).
\end{displaymath}

Therefore, for any path $\eta\in\Lambda^{\ast}$, we have $(\pi\otimes[.])\alpha(S_{\lambda}^{\ast})U(\chi_{[\eta]})=U(\pi(S_{\lambda^{\ast}}))(\chi_{[\eta]})$ and hence for any $q\in C({\rm Aut}^{+}(\Lambda))$,
\begin{displaymath}
  (\pi\otimes[.])\alpha(S_{\lambda}^{\ast})\widetilde{U}(\chi_{[\eta]}\otimes q)=\widetilde{U}(\pi(S_{\lambda}^{\ast})\otimes [1])(\chi_{\eta}\otimes q). 
\end{displaymath} 
By density of ${\rm Sp}\{\chi_{[\eta]}\otimes q:\eta\in\Lambda^{\ast}, q\in C({\rm Aut}^{+}(\Lambda))\}$ in the Hilbert space $\clh\otimes L^{2}({\rm Aut}^{+}(\Lambda))$ and unitarity of $\widetilde{U}$, we have for any $\lambda\in\Lambda^{\ast}$,
\begin{eqnarray}
\label{implementation star}
    (\pi\otimes[.])\alpha(S_{\lambda}^{\ast})=\widetilde{U}(\pi(S_{\lambda}^{\ast})\otimes[1])\widetilde{U}^{\ast}\in \clb(\clh\otimes L^{2}({\rm Aut}^{+}(\Lambda))).
\end{eqnarray}
Using the same computational technique it can be shown that 
\begin{eqnarray}
\label{implementation non star}
    (\pi\otimes[.])\alpha(S_{\lambda})=\widetilde{U}(\pi(S_{\lambda})\otimes[1])\widetilde{U}^{\ast}\in \clb(\clh\otimes L^{2}({\rm Aut}^{+}(\Lambda)))
\end{eqnarray}
for any $\lambda\in\Lambda^{\ast}$. Alternatively, one can see as before that Equation \ref{implementation non star} is equivalent to the following for $\lambda,\eta\in\Lambda^{\ast}$:
\begin{equation}
\label{non star}
    (\pi\otimes[.])\alpha(S_{\lambda})U(\chi_{[\eta]})=U(\pi(S_{\lambda}))(\chi_{[\eta]}).
\end{equation}
Again recalling the representation $\pi$ of $C^{\ast}(\Lambda)$, we see that RHS of the above is equal to $\rho(\Lambda)^{d(\lambda/2)}U(\chi_{[\lambda\eta]})$ whereas the LHS is equal to 
\begin{equation}
\label{temp}    
\begin{aligned}
    &\rho(\Lambda)^{d(\mu)/2}\sum_{}\chi_{[\mu\xi]}\otimes q_{r(\mu_{1})r(\lambda_{1})}q_{s(\mu_{1})s(\lambda_{1})}\cdots q_{r(\mu_{n})r(\lambda_{n})}q_{s(\mu_{n})s(\lambda_{n})}q_{r(\xi_{1})r(\eta_{1})}\\
    &q_{s(\xi_{1})s(\eta_{1})}\cdots 
    q_{r(\xi_{m})r(\eta_{m})}q_{s(\xi_{m})s(\eta_{m})}
\end{aligned}
\end{equation} 
where $\mu=\mu_{1}\ldots\mu_{n}$ and $\xi=\xi_{1}\ldots\xi_{m}$;
\begin{displaymath}  
\alpha(S_{\lambda})=\sum S_{\mu}\otimes q_{r(\mu_{1})r(\lambda_{1})}q_{s(\mu_{1})s(\lambda_{1})}\cdots q_{r(\mu_{n})r(\lambda_{n})}q_{s(\mu_{n})s(\lambda_{n})}
\end{displaymath} 
and 
\begin{displaymath}
U(\chi_{[\eta]})=\sum\limits_{\xi}\chi_{[\xi]}\otimes q_{r(\xi_{1})r(\eta_{1})}q_{s(\xi_{1})s(\eta_{1})}\cdots \\
    q_{r(\xi_{m})r(\eta_{m})}q_{s(\xi_{m})s(\eta_{m})}.
\end{displaymath} 
A moment's reflection reveals that the coefficients of $\chi_{[\mu\xi]}$ in \ref{temp} are the coefficients of $S_{\mu}S_{\xi}$ in the expression of $\alpha(S_{\lambda})\alpha(S_{\eta})$. Then the homomorphic property of $\alpha$ coupled with the fact that $d(\mu)=d(\lambda)$ for all $\mu$ proves Equation \ref{non star}.  
Since both $(\pi\otimes[.])\alpha$ and ${\rm ad}_{\widetilde{U}}$ are $C^{\ast}$-homomorphisms, we have the following from \ref{implementation star} and \ref{implementation non star}:
\begin{displaymath}
 (\pi\otimes[.])\alpha(S_{\lambda}S_{\mu}^{\ast})=\widetilde{U}(\pi(S_{\lambda}S_{\mu}^{\ast})\otimes[1])\widetilde{U}^{\ast}\in \clb(\clh\otimes L^{2}({\rm Aut}^{+}(\Lambda))),   
\end{displaymath}
for $\lambda,\mu\in\Lambda^{\ast}$. As ${\rm Sp}\{S_{\lambda}S_{\mu}^{\ast}:\lambda,\mu\in\Lambda^{\ast}\}$ is dense in $C^{\ast}(\Lambda)$, using the continuity of the maps involved, we get
\begin{displaymath}
 (\pi\otimes[.])\alpha(a)=\widetilde{U}(\pi(a)\otimes[1])\widetilde{U}^{\ast}\in \clb(\clh\otimes L^{2}({\rm Aut}^{+}(\Lambda))),   
\end{displaymath}
for all $a\in C^{\ast}(\Lambda)$, proving the proposition.
\end{proof} 
 {\it Proof of Theorem \ref{maintheorem}}: Combining Proposition \ref{commutation with D} and Proposition \ref{implementation}, we see that the action $\alpha$ satisfies conditions (I) and (II) of Definition \ref{quantumisometricdefinition}. This completes the proof of the main theorem.\qed
\section{A non-isometric action on the Cuntz algebra}
We begin this section with an observation from the foregoing section. Note that in the proof of unitarity of $\widetilde{U}$, the invariance of KMS-state under the given action $\alpha$ was crucially used (see Lemma \ref{inner product preserving on K}). So as mentioned in the introduction, it is natural to ask the question whether any object in the category of CQG's acting linearly on the graph $C^{\ast}$-algebra of a strongly connected graph in the sense of \cite{joardar1} such that the KMS state is invariant act isometrically or not. In this section we give a counter example. To that end let us recall that the quantum unitary group $U_{n}^{+}$ acts linearly on the Cuntz algebra $\clo_{n}$ and it is the universal object in the category of CQG's acting linearly on $\clo_{n}$ such that the unique KMS-sate is invariant. Let us recall the action $\alpha:\clo_{n}\raro\clo_{n}\otimes C(U_{n}^{+})$ given by (see \cite{joardar1})
\begin{equation}
    \label{Cuntz action}
    \alpha(S_{e_{i}})=\sum_{j=1}^{n}S_{e_{j}}\otimes q_{ji} 
\end{equation}
where $\{e_{1},\ldots,e_{n}\}$ are the loops of the graph of the Cuntz algebra ; $S_{e_{i}}$'s are the generating partial isometries; and $\{q_{ij}\}_{i,j=1,\ldots,n}$ are the generators of $C(U_{n}^{+})$. In the following we are going to retain the standard notations used in the previous section.
\begin{theorem}
    The action $\alpha$ of $U_{n}^{+}$ on $\clo_{n}$ given by \ref{Cuntz action} is not isometric in the sense of Definition \ref{quantumisometricdefinition}.
\end{theorem}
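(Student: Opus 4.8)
The plan is to argue by contradiction: I assume $\alpha$ is isometric and show this forces the relation $\sum_{i}q_{ji}=1$ to hold in $C(U_{n}^{+})$ for every $j$, which is false. First I would record the relevant spectral data for $\clo_{n}$. The underlying graph has a single vertex $v$ with loops $e_{1},\dots,e_{n}$, so $\rho(\Lambda)=n$, the constant function $\chi_{[v]}\equiv 1$ spans $\clr_{0}$, and because there is only one vertex $\clr_{-1}=\clr_{0}=\IC\,\chi_{[v]}$. Since $D=\sum_{q}\alpha_{q}\widehat{\Xi}_{q,q-1}$ with the $\alpha_{q}$ strictly positive, the kernel of $D$ is exactly the one-dimensional space of constants $\clr_{0}$.

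Suppose $\widetilde{U}$ is a unitary co-representation implementing $\alpha$ and satisfying $\widetilde{U}(D\otimes 1)=(D\otimes 1)\widetilde{U}$, with associated $\IC$-linear map $U(\xi)=\widetilde{U}(\xi\otimes 1)$. The first step uses condition (II) of Definition \ref{quantumisometricdefinition}: as $\widetilde{U}$ commutes with the self-adjoint operator $D$, it commutes with its spectral projection onto $\ker D=\clr_{0}$, so $U$ preserves the one-dimensional space $\clr_{0}$ and $U(\chi_{[v]})=\chi_{[v]}\otimes c$ for a unique $c\in C(U_{n}^{+})$. The isometry property (I) gives $c^{*}c=\langle U(\chi_{[v]}),U(\chi_{[v]})\rangle=\langle\chi_{[v]},\chi_{[v]}\rangle 1=1$ since $M$ is a probability measure, and evaluating the co-representation identity $(U\otimes{\rm id})\circ U=({\rm id}\otimes\Delta)\circ U$ at $\chi_{[v]}$ yields $\Delta(c)=c\otimes c$. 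Applying ${\rm id}\otimes\epsilon$ (using $(\epsilon\otimes{\rm id})\Delta(c)=c$) forces $\epsilon(c)=1$, so $c$ is invertible in $C(U_{n}^{+})_{0}$ with inverse $S(c)$; this invertibility is all I will need.

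Next I would compute $U$ on $\clr_{1}$ from the implementation. Using $(\pi\otimes[.])\alpha(S_{e_{i}})U(\chi_{[v]})=U(\pi(S_{e_{i}})\chi_{[v]})$ together with $\pi(S_{e_{i}})\chi_{[v]}=n^{1/2}\chi_{[e_{i}]}$ and $\alpha(S_{e_{i}})=\sum_{j}S_{e_{j}}\otimes q_{ji}$ from \ref{Cuntz action}, I obtain $U(\chi_{[e_{i}]})=\sum_{j}\chi_{[e_{j}]}\otimes q_{ji}c$. I then invoke the key identity $\chi_{[v]}=\sum_{i}\chi_{[e_{i}]}$ in $\clh$ (the whole path space is the disjoint union of the length-one cylinder sets), apply $U$, and compare with $U(\chi_{[v]})=\chi_{[v]}\otimes c=\sum_{j}\chi_{[e_{j}]}\otimes c$. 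Linear independence of the $\chi_{[e_{j}]}$ forces $(\sum_{i}q_{ji})c=c$ for every $j$, and since $c$ is invertible, $\sum_{i}q_{ji}=1$ for all $j$. This contradicts the structure of $U_{n}^{+}$: under the canonical surjection $C(U_{n}^{+})\raro C(U_{n})$ onto the classical unitary group, $\sum_{i}q_{ji}$ maps to the $j$-th row-sum function on $U_{n}$, which is plainly not the constant $1$ (e.g.\ it takes the value $\zeta\neq 1$ at a diagonal unitary with $(j,j)$-entry $\zeta$).

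The crux of the argument — and the precise point where the situation differs from the main theorem — is that commuting with $D$ pins $U$ down on the one-dimensional kernel of constants $\clr_{0}$, and the splitting $\chi_{[v]}=\sum_{i}\chi_{[e_{i}]}$ then transfers this rigidity into the row-sum relation $\sum_{i}q_{ji}=1$. For ${\rm Aut}^{+}(\Lambda)$ the magic-unitary relations $\sum_{i}q_{ji}=1$ hold by fiat, which is exactly what made the analogous map well defined there; $U_{n}^{+}$ lacks these relations, and I expect verifying that the implementing $\widetilde{U}$ is genuinely forced onto $\clr_{0}$ (so that no alternative choice evades the relation) to be the only delicate point, handled by the spectral-projection argument above.
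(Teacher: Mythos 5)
Your proof is correct and follows essentially the same route as the paper's: commutation with $D$ forces $U(\chi_{[v]})=\chi_{[v]}\otimes c$, the implementation condition gives $U(\chi_{[e_{i}]})=\sum_{j}\chi_{[e_{j}]}\otimes q_{ji}c$, and applying $U$ to the splitting $\chi_{[v]}=\sum_{i}\chi_{[e_{i}]}$ yields $\bigl(\sum_{i}q_{ji}\bigr)c=c$, hence the impossible row-sum relation $\sum_{i}q_{ji}=1$ (the paper reaches this by evaluating at a point of $[e_{k}]$ rather than by linear independence, a cosmetic difference). The one imprecision is your invertibility argument for $c$: the antipode $S$ is defined only on the dense Hopf $\ast$-algebra $C(U_{n}^{+})_{0}$ and you have not shown that $c$ lies there; it is cleaner to note, as the paper implicitly does in asserting that $q$ is unitary, that since $\widetilde{U}$ is unitary and commutes with the spectral projection onto $\clr_{0}$, its restriction to the corner $\clr_{0}\overline{\otimes}C(U_{n}^{+})$ is a unitary element of $\cll\bigl(\clr_{0}\overline{\otimes}C(U_{n}^{+})\bigr)\cong C(U_{n}^{+})$, so $c$ is unitary and the cancellation is immediate.
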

\begin{proof}
    To prove the theorem we have to show that there is no unitary co-representation of $U_{n}^{+}$ on the Hilbert space of the infinite path space (say $\clh$ as before) commutting with the Dirac operator can implement the given action. To that end let us take a unitary co-representation $\widetilde{U}$ that commutes with the Dirac operator $D$. We shall work with the corresponding $\mathbb{C}$-linear map $U:\clh\raro\clh\overline{\otimes}C(U_{n}^{+})$. Then $U$ must preserve the eigen space $\clr_{0}$ of $D$ which is one-dimensional and spanned by the constant function $1$ which is $\chi_{[v]}$ where $v$ is the only vertex of the graph.  Then $U(1)=1\otimes q$ for some unitary $q\in C(U_{n}^{+})$. If possible, let $U$ implement $\alpha$. Then we have $(\pi\otimes [.])\alpha(S_{e_{i}})U(1)=U(\pi(S_{e_{i}}).1)=U(\chi_{[e_{i}]})$. By expanding $\alpha(S_{e_{i}})$, we get
\begin{eqnarray*}
    (\pi\otimes [.])\alpha(S_{e_{i}})U(1)
    &=&(\sum\limits_{j}\pi(S_{e_{j}})\otimes [q_{ji}])(1\otimes q)\\
    &=&\sum\limits_{j} \pi(S_{e_{j}}).1\otimes q_{ji}q \\
    &=&\sum\limits_{j} \chi_{[e_{j}]}\otimes q_{ji}q 
\end{eqnarray*}
Thus we have $U(\chi_{[e_{i}]})=\sum\limits_{j} \chi_{[e_{j}]}\otimes q_{ji}q$. But $1=\sum\limits_{j}\chi_{[e_{j}]}$ and therefore $U(1)=U(\sum\limits_{i}\chi_{[e_{i}]})=\sum\limits_{i,j}\chi_{[e_{j}]}\otimes q_{ji}q$. As $U(1)=1\otimes q$, we get 
\begin{equation}\label{contradict equation}\sum\limits_{j}\chi_{[e_{j}]}\otimes \left(\sum\limits_{i}q_{ji}q \right)=\sum\limits_{j}\chi_{[e_{j}]}\otimes q.\end{equation} Now choose a point $x\in\Lambda^{\infty}$ such that $x\in[e_{k}]$ for a unique $k$. Then evaluating both sides of the Equation \ref{contradict equation} at the point $x$, we get
$\sum_{i}q_{ki}q=q$ which, by unitarity of $q$, implies that $\sum_{i}q_{ki}=1$, which is a contradiction as the generators of $C(U_{n}^{+})$ do not satisfy these relations.
\end{proof}
\begin{remark}
(1) The quantum permutation group $S_{n}^{+}$ has a `linear' action $\alpha$ on $\clo_{n}$ given by 
  \begin{displaymath}
      \alpha(S_{e_{i}})=\sum_{j=1}^{n} S_{e_{j}}\otimes q_{ji},
  \end{displaymath}
  where $\{q_{ij}\}_{i,j=1,\ldots,n}$ are the generators of $C(S_{n}^{+})$. Then it can be verified that the unique KMS state of $\clo_{n}$ is invariant under the action $\alpha$. Using the same computational technique and same line of arguments it can be shown that the action $\alpha$ is also isometric in the sense of Definition \ref{quantumisometricdefinition}. The quantum group $S_{n}^{+}$ is a quantum symmetry object of the underlying graph of $\clo_{n}$ in the sense of Goswami et al. (\cite{asfaq}) where they consider quantum symmetry of finite graphs possibly with multiple edges. It seems reasonable to expect that the quantum symmetry of the underlying graph of a graph $C^{\ast}$-algebra is well behaved in the context of quantum isometric actions with respect to the class of spectral triple on graph $C^{\ast}$-algebras constructed by Farsi et al. in \cite{farsi}. On can explore further in this direction by considering graphs with possible multiple edges.\\
  \indent (2) Since the spectral triple is $\Theta$-summable for generic choices of eigenvalues of the Dirac operator, one can consider quantum isometry in terms of a `good Laplacian' in the sense of \cite{goswami2} and possibly compare with our consideration.
\end{remark}
{\bf Acknowledgement} The first author would like to acknowledge the support provided by SERB MATRICS grant (MTR/2022/000515) provided by the government of India. Both the authors would like to acknowledge the support provided by the FIST grant (SR/FST/MS-II/2019/51) provided by the government of India. The first author would like to thank Debashish Goswami for some helpful inputs.
\begin{bibdiv}
    \begin{biblist}
\setlength{\itemsep}{0.5em}
\bib{banica}{article}{
   author={Banica, Teodor},
   title={Quantum automorphism groups of homogeneous graphs},
   journal={J. Funct. Anal.},
   volume={224},
   date={2005},
   number={2},
   pages={243--280},
   issn={0022-1236},
   review={\MR{2146039}},
   doi={10.1016/j.jfa.2004.11.002},
}

\bib{joardar_canadian}{article}{
title={Equivariant $\textrm {C}^{\ast}$-correspondences and compact quantum group actions on Pimsner algebras}, DOI={10.4153/S0008414X23000810}, journal={Canadian Journal of Mathematics}, 
author={Bhattacharjee, Suvrajit},
author= {Joardar, Soumalya}, 
year={2023}, pages={1–40}
}

\bib{goswami1}{article}{
   author={Bhowmick, Jyotishman},
   author={Goswami, Debashish},
   title={Quantum group of orientation-preserving Riemannian isometries},
   journal={J. Funct. Anal.},
   volume={257},
   date={2009},
   number={8},
   pages={2530--2572},
   issn={0022-1236},
   review={\MR{2555012}},
   doi={10.1016/j.jfa.2009.07.006},
}
\bib{decommer}{article}{
      title={Actions of compact quantum groups}, 
      author={Kenny De Commer},
      year={2016},
      eprint={1604.00159},
      journal={arXiv},
      url={https://arxiv.org/abs/1604.00159}, 
}
\bib{marcolli}{article}{
   author={Consani, Caterina},
   author={Marcolli, Matilde},
   title={Noncommutative geometry, dynamics, and $\infty$-adic Arakelov
   geometry},
   journal={Selecta Math. (N.S.)},
   volume={10},
   date={2004},
   number={2},
   pages={167--251},
   issn={1022-1824},
   review={\MR{2080121}},
   doi={10.1007/s00029-004-0369-3},
}
\bib{rossi}{article}{
   author={Conti, Roberto},
   author={Rossi, Stefano},
   title={Groups of isometries of the Cuntz algebras},
   journal={Doc. Math.},
   volume={26},
   date={2021},
   pages={1799--1815},
   issn={1431-0635},
   review={\MR{4353340}},
}
\bib{farsi}{article}{
   author={Farsi, Carla},
   author={Gillaspy, Elizabeth},
   author={Julien, Antoine},
   author={Kang, Sooran},
   author={Packer, Judith},
   title={Spectral triples for higher-rank graph $C^*$-algebras},
   journal={Math. Scand.},
   volume={126},
   date={2020},
   number={2},
   pages={321--338},
   issn={0025-5521},
   review={\MR{4102567}},
   doi={10.7146/math.scand.a-119260},
}
\bib{seperablerepresentation}{article}{
   author={Farsi, Carla},
   author={Gillaspy, Elizabeth},
   author={Kang, Sooran},
   author={Packer, Judith A.},
   title={Separable representations, KMS states, and wavelets for
   higher-rank graphs},
   journal={J. Math. Anal. Appl.},
   volume={434},
   date={2016},
   number={1},
   pages={241--270},
   issn={0022-247X},
   review={\MR{3404559}},
   doi={10.1016/j.jmaa.2015.09.003},
}
 \bib{fulton}{article}{
   author={Fulton},
   title={The quantum automorphism group and directed trees},
   journal={Ph. D. thesis, Virginia}, 
   date={2006}
}
\bib{goswami2}{article}{
   author={Goswami, Debashish},
   title={Quantum group of isometries in classical and noncommutative
   geometry},
   journal={Comm. Math. Phys.},
   volume={285},
   date={2009},
   number={1},
   pages={141--160},
   issn={0010-3616},
   review={\MR{2453592}},
   doi={10.1007/s00220-008-0461-1},
}
\bib{goswami3}{article}{
   author={Goswami, Debashish},
   author={Joardar, Soumalya},
   title={Non-existence of faithful isometric action of compact quantum
   groups on compact, connected Riemannian manifolds},
   journal={Geom. Funct. Anal.},
   volume={28},
   date={2018},
   number={1},
   pages={146--178},
   issn={1016-443X},
   review={\MR{3777415}},
   doi={10.1007/s00039-018-0437-z},
}
\bib{asfaq}{article}{
   author={Goswami, D.},
   author={Asfaq Hossain, Sk.},   
   title={Quantum symmetry in multigraphs}, 
   journal={https://doi.org/10.48550/arXiv.2302.08726}
}
\bib{KMS}{article}{
   author={an Huef, Astrid},
   author={Laca, Marcelo},
   author={Raeburn, Iain},
   author={Sims, Aidan},
   title={KMS states on the $C^*$-algebras of finite graphs},
   journal={J. Math. Anal. Appl.},
   volume={405},
   date={2013},
   number={2},
   pages={388--399},
   issn={0022-247X},
   review={\MR{3061018}},
   doi={10.1016/j.jmaa.2013.03.055},
}

\bib{joardar1}{article}{
   author={Joardar, Soumalya},
   author={Mandal, Arnab},
   title={Quantum symmetry of graph $C^*$-algebras associated with connected
   graphs},
   journal={Infin. Dimens. Anal. Quantum Probab. Relat. Top.},
   volume={21},
   date={2018},
   number={3},
   pages={1850019, 18},
   issn={0219-0257},
   review={\MR{3860252}},
   doi={10.1142/S0219025718500194},
}
\bib{joardar2}{article}{
    author={Joardar, Soumalya}, 
    author={Mandal, Arnab},
    title={Invariance of KMS states on graph $C^{\ast}$-algebras under classical and quantum symmetry}, 
    journal={Proc. of the Edin. Math. Soc. (2)}, 
    volume={64}, 
    date={2021}, 
    pages={762-778}
}
\bib{joardar3}{article}{
   author={Joardar, Soumalya},
   author={Mandal, Arnab},
   title={Quantum symmetry of graph $C^{\ast }$-algebras at critical inverse
   temperature},
   journal={Studia Math.},
   volume={256},
   date={2021},
   number={1},
   pages={1--20},
   issn={0039-3223},
   review={\MR{4159553}},
   doi={10.4064/sm190102-30-9},
}
\bib{arnab}{article}{
      title={Unitary Easy Quantum Groups Arising as Quantum Symmetries of Graph C*-algebras}, 
      author={Ujjal Karmakar and Arnab Mandal},
      year={2024},
      eprint={2406.13484},
      url={https://arxiv.org/abs/2406.13484}, 
}
\bib{kumjian_pask_raeburn}{article}{
   author={Kumjian, Alex},
   author={Pask, David},
   author={Raeburn, Iain},
   title={Cuntz-Krieger algebras of directed graphs},
   journal={Pacific J. Math.},
   volume={184},
   date={1998},
   number={1},
   pages={161--174},
   issn={0030-8730},
   review={\MR{1626528}},
   doi={10.2140/pjm.1998.184.161},
}
\bib{park}{article}{
   author={Park, Efton},
   title={Isometries of noncommutative metric spaces},
   journal={Proc. Amer. Math. Soc.},
   volume={123},
   date={1995},
   number={1},
   pages={97--105},
   issn={0002-9939},
   review={\MR{1213868}},
   doi={10.2307/2160614},
}
\bib{vandaele}{article}{
   author={Maes, Ann},
   author={Van Daele, Alfons},
   title={Notes on compact quantum groups},
   journal={Nieuw Arch. Wisk. (4)},
   volume={16},
   date={1998},
   number={1-2},
   pages={73--112},
   issn={0028-9825},
   review={\MR{1645264}},
}

\bib{neshveyevrepcategory}{book}{
   author={Neshveyev, Sergey},
   author={Tuset, Lars},
   title={Compact quantum groups and their representation categories},
   series={Cours Sp\'ecialis\'es [Specialized Courses]},
   volume={20},
   publisher={Soci\'et\'e{} Math\'ematique de France, Paris},
   date={2013},
   pages={vi+169},
   isbn={978-2-85629-777-3},
   review={\MR{3204665}},
}
\bib{kumjian-pask}{article}{
    author={Pask, David},
    author={Kumjian, Alex},
    title={Higher rank graph $C^{\ast}$-algebras}, 
    journal={New York J. Math.}, 
    volume={6}, 
    date={2020}, 
    pages={1-20}
    }
\bib{raeburn}{book}{
   author={Raeburn, Iain},
   title={Graph algebras},
   series={CBMS Regional Conference Series in Mathematics},
   volume={103},
   publisher={Conference Board of the Mathematical Sciences, Washington, DC;
   by the American Mathematical Society, Providence, RI},
   date={2005},
   pages={vi+113},
   isbn={0-8218-3660-9},
   review={\MR{2135030}},
   doi={10.1090/cbms/103},
}

\bib{weber}{article}{
   author={Schmidt, Simon},
   author={Weber, Moritz},
   title={Quantum symmetries of graph $C^*$-algebras},
   journal={Canad. Math. Bull.},
   volume={61},
   date={2018},
   number={4},
   pages={848--864},
   issn={0008-4395},
   review={\MR{3846752}},
   doi={10.4153/CMB-2017-075-4},
}

\bib{wang}{article}{
   author={Wang, Shuzhou},
   title={Quantum symmetry groups of finite spaces},
   journal={Comm. Math. Phys.},
   volume={195},
   date={1998},
   number={1},
   pages={195--211},
   issn={0010-3616},
   review={\MR{1637425}},
   doi={10.1007/s002200050385},
}

\bib{woronowicz1}{article}{
   author={Woronowicz, S. L.},
   title={Compact quantum groups},
   conference={
      title={Sym\'etries quantiques},
      address={Les Houches},
      date={1995},
   },
   book={
      publisher={North-Holland, Amsterdam},
   },
   isbn={0-444-82867-2},
   date={1998},
   pages={845--884},
   review={\MR{1616348}},
}
    \end{biblist}
\end{bibdiv}
\end{document}